\renewcommand{\mathbf}{\mathbold}
\newcounter{thecounter}
\numberwithin{thecounter}{section}
\newtheorem{proposition}[thecounter]{Proposition}
\newtheorem*{theorem*}{Theorem}
\newtheorem{thm}[thecounter]{Theorem}
\newtheorem{corollary}[thecounter]{Corollary}
\newtheorem{defn}[thecounter]{Definition}
\newtheorem{rem}[thecounter]{Remark}
\newtheorem{ex}[thecounter]{Example}
\def\Z{{\mathbb Z}}
\newcommand{\bA}{\mathbf{A}}
\newcommand{\bB}{\mathbf{B}}
\newcommand{\G}{\mathrm{G}}
\newcommand{\HH}{\mathrm{H}}
\newcommand{\X}{\mathbb{X}}
\newcommand{\Y}{\mathbb{Y}}
\newcommand{\cgog}{\underline{\G}}
\newcommand{\Oh}{\mathbf{O}}
\newcommand{\id}{\mathrm{\text{id}}}
\newcommand{\Quot}{{\X\mkern-1.5mu/\mkern-2.2mu\G}} 
\newcommand{\Grp}{\mathrm{\mbox{\bf Grp}}}
\newcommand{\Sub}{\mathrm{\mbox{\bf Sub}}}
\newcommand{\Fac}{\mathrm{\mbox{\bf Fac}}}
\newcommand{\CG}{\mathrm{\mbox{\bf Com}}}
\newcommand{\EC}{\mathrm{\mbox{\bf Str}}_{\G}}
\renewcommand{\succeq}{\succ}
\newcommand{\tab}{\hspace{.25in}}
\newcommand{\bq}{\mathbf{q}}
\newcommand{\bp}{\mathbf{p}}
\newcommand{\pro}{\bp}
\newcommand{\Ad}{\mathrm{{Ad}}}
\newcommand{\fiber}{\mathbin{\mkern-6mu/\mkern-8.5mu/}\mkern-5mu} 
\newcommand{\bs}{\mathbf{s}}
\newcommand{\com}{\mkern-3mu\circ\mkern-2.4mu}
\newcommand{\cd}{\mkern-2mu\cdot\mkern-1.6mu} 
\newcommand{\inv}{^{-1}}
\newcommand{\inj}{\hookrightarrow}
\newcommand{\surj}{\twoheadrightarrow}
\newcommand{\two}{\Rightarrow}
\newcommand{\cst}{\mathbf{c}}
\newcommand{\btrt}{\blacktriangleright}
\begin{document}

\title{Equivariant Simplicial Reconstruction}
\author{Lisa Carbone, Vidit Nanda and Yusra Naqvi}

\begin{abstract}
We introduce and analyze parallelizable algorithms to compress and accurately reconstruct finite simplicial complexes that have non-trivial automorphisms. The compressed data -- called a complex of groups -- amounts to a functor from (the poset of simplices in) the orbit space to the 2-category of groups, whose higher structure is prescribed by isomorphisms arising from conjugation. Using this functor, we show how to algorithmically recover the original complex up to equivariant simplicial isomorphism. Our algorithms are derived from generalizations (by Bridson-Haefliger, Carbone-Rips and Corson, among others) of the classical Bass-Serre theory for reconstructing group actions on trees.
\end{abstract}

\maketitle

\section*{Introduction}

The ongoing proliferation of large, complicated and vital datasets has sparked considerable activity focused on rendering hitherto-abstract branches of mathematics applicable to processing complex data. Examples of this phenomenon include, but are by no means restricted to, the recent employment of Laplacian eigenfunctions for spectral embedding \cite{belkin:niyogi:03}, of rough paths in machine learning \cite{chevyrev:kormilitzin:16}, of sheaf theory for linear programming \cite{krishnan:14}, of Morse theory for image processing \cite{friedrichs:robins:sheppard:14} and of (co)homology for data analysis \cite{ghrist:09}. In each case, efficient algorithms have catalyzed the percolation of theory to application. And particularly in the last two examples, these algorithms accept as input some cell complex structure (often simplicial or cubical) imposed on the constituent elements of a given dataset. 

Our work here continues these efforts: we adapt a framework originally devised for the study of infinite group actions on non-positively curved spaces \cite{bridson:haefliger:99, serre:80} to the concrete task of efficiently exploiting symmetries to compress and reconstruct finite simplicial complexes. The central contribution of this paper therefore consists of two parallelizable algorithms. The first, called {\tt Compress}, accepts as input a finite simplicial complex $\X$ along with a subgroup $\G$ of its automorphism group, and outputs a compressed structure $\bA$ called a {\em complex of groups} \cite{haefliger:92, corson:92}. This $\bA$ may, for the purposes of these introductory remarks, be regarded as a (typically much smaller) group-weighted simplicial complex. The second algorithm, {\tt Reconstruct}, inverts the first by using the overlaid algebraic data to correctly unfold the complex of groups $\bA$ so that $\X$ is recovered up to $\G$-equivariant isomorphism.

Assume, for the purposes of this introduction, that $\X$ is a triangulated bow-tie:

\begin{figure}[h!]
\includegraphics[scale=0.35]{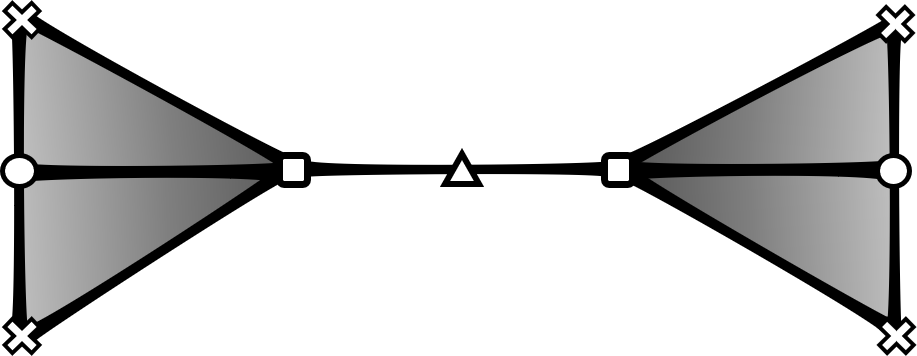}
\end{figure}

\noindent and consider the action of the Klein four-group $\G = \left\langle\sigma,\tau \mid \sigma^2 = \tau^2 = (\sigma\tau)^2=1\right\rangle$ where $\sigma$ and $\tau$ act by reflection across the horizontal and vertical axis through the central vertex $\Delta$ respectively (vertices lying in the same orbit have been decorated similarly). One possible choice of quotient $\Quot$ is the following {\em fundamental domain}, i.e., a subcomplex of $\X$ which intersects each $\G$-orbit exactly once:

\begin{figure}[h!]
\includegraphics[scale=0.4]{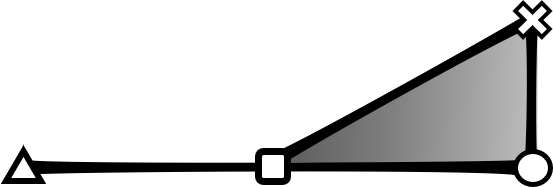}
\end{figure}

It is not too difficult to construct an infinite family of other group actions on different simplicial complexes which produce the same quotient. Clearly, $\Quot$ does not carry sufficient information to recover either $\X$ or the $\G$-action. One therefore seeks the minimal amount of additional data necessary for such a reconstruction. Not surprisingly, the extra machinery required varies (in complexity, if not nature) depending on whether $\X$ is a tree \cite{serre:80}, a graph \cite{bass:93, dicks:dunwoody:89}, a smooth manifold \cite{grove:ziller:12}, a Coxeter complex \cite{davis:08}, or the classifying space of a small loopfree category \cite{haefliger:92}. 

In any event, the complex of groups $\bA$ for the action described above assigns to each simplex $y$ of $\Quot$ a stabilizer subgroup $\bA_y \leq \G$ which fixes some simplex of $\X$ lying in the corresponding orbit class:

\begin{figure}[h!]
\includegraphics[scale=0.4]{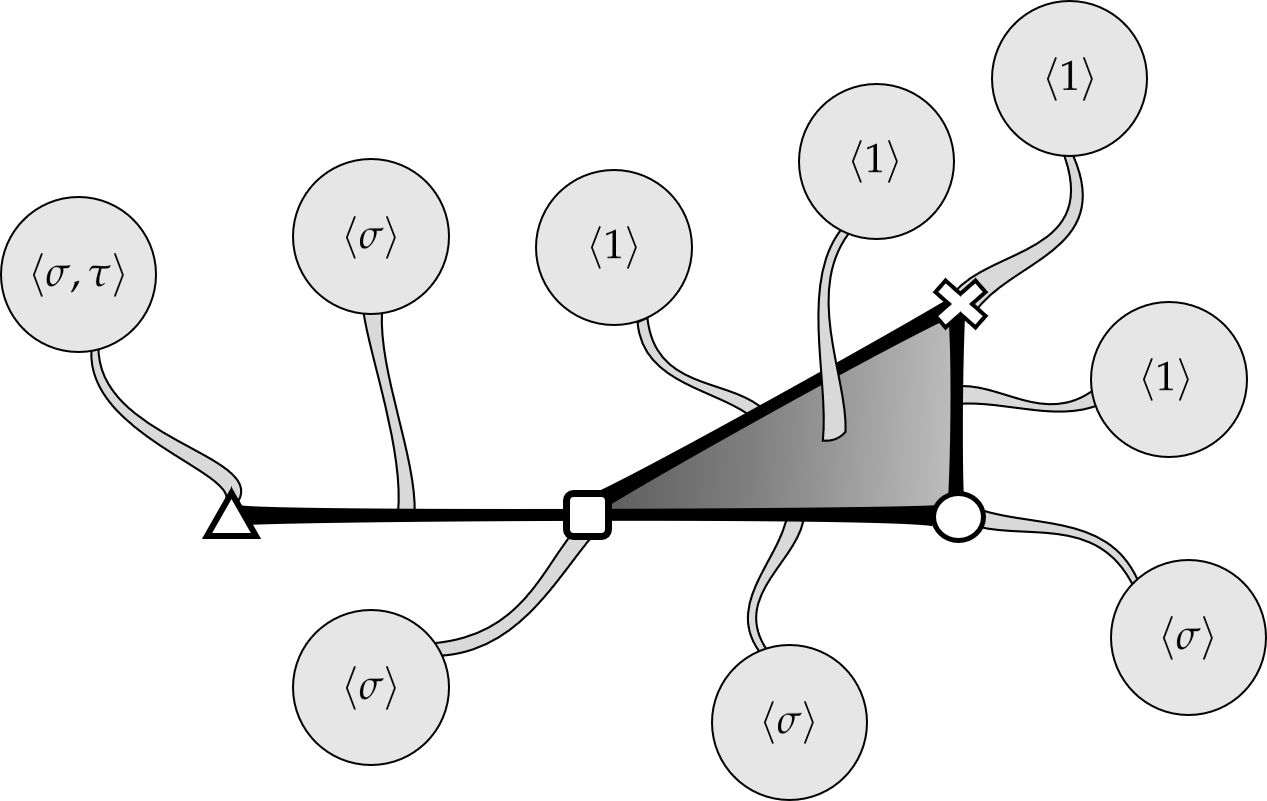}
\end{figure}

\noindent The group assigned to each simplex includes into the groups assigned to simplices in its boundary, so our $\bA$ is a group-valued {\em cellular cosheaf} \cite{curry:13}, i.e., a functor from the poset of simplices in $\Quot$ to the lattice of subgroups of $\G$. It should be noted that -- at least in our simple example -- $\G$ is the colimit of $\bA$. An appeal to the orbit-stabilizer theorem also guarantees that the number of simplices in $\X$ whose orbit class is represented by a simplex $y$ in $\Quot$ equals the usual index $[\G:\bA_y]$ which counts cosets of $\bA_y$ in $\G$. Our next task, therefore, is to determine the correct face relations among these simplices.

At this stage, the scope of the difficulty starts to become apparent: how should one glue the coset-indexed simplices below so that the result is (isomorphic to) $\X$?

\begin{figure}[h!]
\includegraphics[scale=0.4]{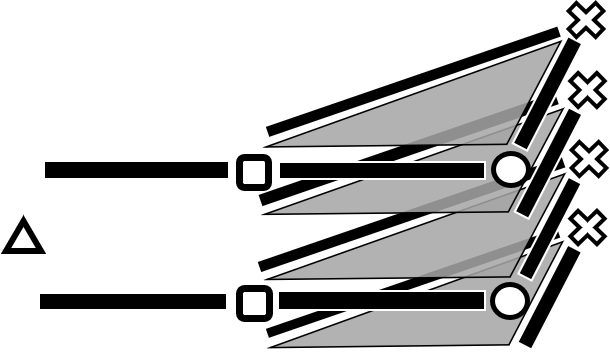}
\end{figure}

\noindent Knowledge of the subgroup indices $[\G:\bA_y]$ across all simplices $y$ in $\Quot$ does not uniquely specify the desired face relations between our coset-simplices. Making unfortunate choices of attachments, even between vertices and edges, could easily backfire: 

\begin{figure}[h!]
\includegraphics[scale=0.4]{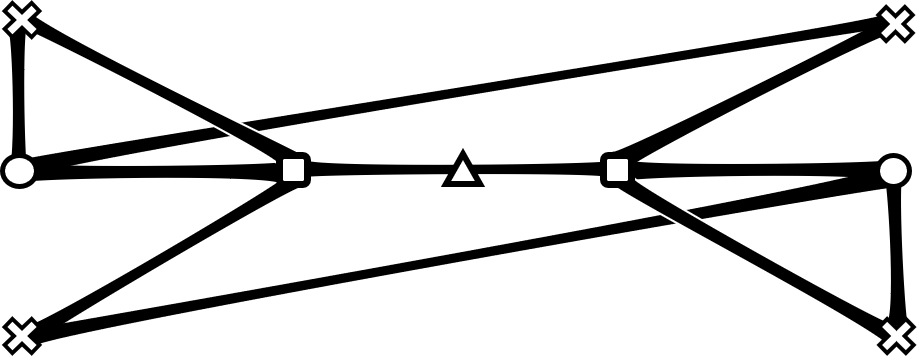}
\end{figure}

Here is a partial summary of the challenges which must be overcome before one can reconstruct more general group actions on more general complexes than our $\G$ and $\X$:
\begin{enumerate}
\item The quotient $\Quot$ may not be a simplicial complex.
\item Even if $\Quot$ is simplicial, it may not be a subcomplex of $\X$.
\item If $\X$ is not simply connected, then $\G$ may not be the colimit of $\bA$.
\item The groups $\bA_y$ are only determined up to conjugation in $\G$.
\item It is unclear how one should glue coset-simplices to recover $\X$.
\end{enumerate}

Obstacle (1) above is bypassed by imposing a mild regularity constraint on the $\G$-action, which is always satisfied after barycentric subdivision. In order to address the remaining challenges, one must carefully keep track of how the various $\bA_y$ embed within each other inside $\G$. Perhaps the most streamlined way to accomplish this is to view $\bA$ as a {\em pseudofunctor}\footnote{Roughly, this means that $\bA$ is associative only up to inner automorphisms. See Def \ref{def:cogs} for details.} by enhancing its target into a 2-category as follows. Given two homomorphisms $\phi$ and $\phi'$ between the same pair of groups, the set of 2-morphisms $\phi \two \phi'$ is given by all elements $g$ in the codomain (if any) which satisfy $\phi(\bullet) = g \cd \phi'(\bullet) \cd g\inv$. The remarkable advantage of this pseudofunctorial perspective, which we exploit in Sec \ref{sec:recon}, is that knowledge of the 2-morphisms lying in the image of $\bA$ solves {\em all} the problems (2)-(5).

The following result motivates and underlies much of our work. In its statement, $\cgog^\Y$ denotes the constant complex of groups over $\Y$, which assigns the group $\G$ to all simplices and identity maps to all face relations in sight.
\begin{theorem*}[Thm 2.18 and Cor 2.19 of Sec III.C in \cite{bridson:haefliger:99}]
For each finite simplicial complex $\Y$ and finite group $\G$, there is an equivalence 
\[
\left[\begin{matrix}
\text{Regular $\G$-actions on finite} \\
\text{simplicial complexes $\X$ with} \\
\text{ quotient }\Quot = \Y
\end{matrix}\right] \stackrel{\simeq}{\longleftrightarrow}
\left[\begin{matrix}
\text{Certain equivalence classes} \\
\text{of complexes of groups over $\Y$} \\
\text{that map injectively into $\cgog^\Y$}
\end{matrix}\right]
\]
\end{theorem*}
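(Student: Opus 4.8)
The plan is to upgrade the stated equivalence to an explicit bijection: I would write down constructions in both directions --- compression and development --- and then verify that they are mutually inverse once one passes to the appropriate equivalence classes of complexes of groups.

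First I would treat the passage from a regular $\G$-action on a finite simplicial complex $\X$ with $\Quot = \Y$ to a complex of groups over $\Y$. Fix a set-theoretic section $y \mapsto \tilde y$ of the quotient map $\X \surj \Y$, equivalently a fundamental domain choosing one simplex $\tilde y$ in each orbit, and put $\bA_y := \mathrm{Stab}_\G(\tilde y)$; the tautological inclusions $\bA_y \inj \G$ then assemble into the desired injection into $\cgog^\Y$. For a face relation $y' \le y$ the chosen representatives $\tilde y'$ and $\tilde y$ need not be incident in $\X$, but some $g \in \G$ carries the corresponding face of $\tilde y$ onto $\tilde y'$; conjugation by such a $g$ sends $\mathrm{Stab}_\G(\tilde y) \le \mathrm{Stab}_\G(\text{that face})$ into $\bA_{y'}$, with the ambiguity in $g$ confined to $\bA_{y'}$. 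Composing these homomorphisms along a chain $y'' \le y' \le y$ recovers the homomorphism attached to $y'' \le y$ only up to conjugation by an element of $\bA_{y''}$, and recording those conjugators is precisely the 2-morphism data that organizes $\bA$ into a pseudofunctor, i.e. a complex of groups.

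Next I would construct the reverse passage, namely the \emph{development} $D(\bA)$ of a complex of groups $\bA$ over $\Y$ equipped with an injection into $\cgog^\Y$ (this is essentially what \texttt{Reconstruct} computes). Its simplices are the pairs $(g\bA_y, y)$ indexed by cosets $g\bA_y$ of $\bA_y$ in $\G$, the incidences among them are dictated by the twisting elements of $\bA$, and $\G$ acts by left translation on the coset coordinate. The orbit-stabilizer theorem then identifies the orbit of $(\bA_y, y)$ with the $[\G:\bA_y]$ cosets of $\bA_y$; the orbit space of $D(\bA)$ is $\Y$ by construction; and the action is regular because regularity is already encoded in the combinatorics of $\Y$.

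The hard part will be checking that $D(\bA)$ is an honest \emph{simplicial complex} rather than merely a simplicial set carrying self-identifications --- that distinct coset-simplices never come to share a vertex set and that every link is again a complex. This is exactly where the hypothesis that $\bA$ injects into $\cgog^\Y$, together with the regularity constraint, is indispensable: it obstructs the degenerate foldings illustrated in the introduction. Granting this, two verifications close the argument, each amounting to bookkeeping with the 2-morphisms. For an action $\X$ with associated complex of groups $\bA$, the assignment $(g\bA_y, y) \mapsto g \cd \tilde y$ is a well-defined $\G$-equivariant simplicial isomorphism $D(\bA) \to \X$: it is bijective on simplices by the orbit-stabilizer count and incidence-preserving by the very definition of the twisting elements. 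Conversely, taking $\{(\bA_y, y)\}$ as fundamental domain yields $\bA_{D(\bA)} \simeq \bA$. Finally I would make precise the phrase \emph{certain equivalence classes} appearing in the statement: two complexes of groups over $\Y$ injecting into $\cgog^\Y$ are to be identified when they differ by a change of fundamental domain, i.e. by conjugating each $\bA_y$ by some $g_y \in \G$ and modifying the twisting elements by the corresponding coboundary; and the remaining task is to show that this is exactly the relation collapsed by $\bA \mapsto D(\bA)$, so that the two constructions descend to mutually inverse maps and the equivalence follows.
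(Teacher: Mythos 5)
Your proposal follows essentially the same route as the paper: compression via an arbitrary choice of lifts and transfers (Sec \ref{sec:comp}), the development given by the Basic Construction on coset-indexed pairs (Sec \ref{ssec:basic}), the equivariant isomorphism $(y,\cst_y(g)) \mapsto g \cd \tilde{y}$ of Proposition \ref{prop:bascon}, and the identification of the ``certain equivalence classes'' with conjugation by elements $k_y \in \G$, which the paper packages as isomorphism in the fiber 2-category $\CG(\Y) \fiber \cgog^\Y$. The only caveats are terminological: what you fix is a transversal of lifts rather than a \emph{fundamental domain} in the paper's sense (a subcomplex meeting each orbit once, which need not exist), and the incidences in the development are governed by the 2-morphism components $\Phi_{y \succeq y'}$ of the injective morphism into $\cgog^\Y$ (the transfers), not by the internal twisting elements $\bA_{y \succeq y' \succeq y''}$ alone.
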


\subsection*{Related Work and Outline} Much of the theoretical machinery presented here (including the theorem above) will be quite familiar, or at least unsurprising, to those with expertise in certain areas of geometric group theory and equivariant algebraic topology. To the best of our knowledge, however, none of the existing literature on reconstructing group actions seriously confronts its algorithmic aspects. To accomplish that task here, we have adapted the treatment in the comprehensive text of Bridson and Haefliger \cite[Ch III.C]{bridson:haefliger:99}, a recent framework developed by the first author with Rips \cite{carbone:rips:13}, and the pseudofunctorial viewpoint mentioned above (which was also employed in Fiore-L\"uck-Sauer \cite[Sec 8]{fiore:luck:sauer11} for a different purpose). In an effort to avoid impeding the progress of readers unfamiliar with this body of work, we have also provided a summary of the basic constructions and fundamental results which are required to verify the correctness of our algorithms.

Our hope is that these algorithms will eventually be used for compressing simplicial complexes built around data points sampled from symmetric manifolds. It is, however, an unfortunate by-product of almost all discretization techniques that symmetries of smooth objects are not directly inherited by their finite approximations. For instance, a dense point cloud sampled uniformly at random from the unit sphere (viewed as a submanifold of Euclidean space) will not be fixed by any non-trivial element of the orthogonal group. Any reasonable framework for inferring automorphisms of symmetric manifolds from random samples will, in all probability, require methods to efficiently discover, quantify and analyze {\em approximate} symmetries of finite data. While that grail-quest lies far beyond the scope of our work here, we direct interested parties to promising geometric \cite{mitra:guibas:pauly:06} and statistical \cite[Sec 3]{gao:brodzki:mukherjee:16} efforts in its general direction. 

The rest of this paper is organized as follows. Sec \ref{sec:back} contains preliminary material regarding complexes of groups, Sec \ref{sec:comp} and Sec \ref{sec:recon} describe the voyage from a simplicial group action to the associated complex of groups and back, while Sec \ref{sec:algos} contains the two promised algorithms {\tt Compress} and {\tt Reconstruct} along with their detailed complexity estimates, which are recorded in Thm \ref{thm:comp} and Thm \ref{thm:recon} respectively.

\section{Backgound} \label{sec:back}

For combinatorial and homological perspectives on simplicial group actions, we invite the reader to consult \cite{babson:kozlov:05} and \cite[Ch III.1]{bredon:72} respectively. General introductions to simplicial complexes, group actions and 2-categories may be found in \cite[Ch III]{spanier:66}, \cite[Ch II.4]{hungerford:74} and \cite{lack:10} respectively. 
 
\subsection{Simplicial Group Actions and Quotients} \label{ssec:simpact}

Fix a finite simplicial complex $\X$ and a finite group $\G$ which acts on $\X$ via simplicial automorphisms. For each simplex $x$ in $\X$, we have an {\em orbit} $\G x = \{g \cd x \mid g \in \G\}$, which is a subset of $\X$, and a {\em stabilizer} $\G_x = \{g \in \G \mid g \cd x = x\}$, which is a subgroup of $\G$. The following definition is adapted from \cite[Ch III.1]{bredon:72}.

\begin{defn}
\label{def:reg}
{\em
The action of $\G$ on $\X$ is called {\bf regular} if the following two properties hold for every simplex $x$ of $\X$. Letting $(v_0,\ldots,v_d)$ denote the vertices of $x$,  
\begin{enumerate}
\item every $g$ in $\G_x$ must satisfy $g \cd v_i = v_i$ for all $i$ in $\{0,\ldots,d\}$, and
\item given $g_0, \ldots, g_d \in \G$, if $x' = (g_0\cd v_0, \ldots, g_d \cd v_d) \in \X$ then $x'$ must lie in $\G x$.
\end{enumerate}
}
\end{defn}
Regular actions are actions {\em without inversion} \cite{bass:93, serre:80} in the special case when $\X$ is a graph (i.e., $\dim \X = 1$), since the first requirement of the definition above prohibits the $\G$-action from interchanging the two vertices of a given edge. We will assume henceforth that $\G$ acts regularly\footnote{Fortunately, regularity is not a severe requirement on group actions --- any automorphism can be made regular via passage to the second barycentric subdivision of $\X$ (see condition (B) in \cite[Ch III.1]{bredon:72}).} in order to avail ourselves of three pleasant consequences. First, if $y$ is a face of $x$ (written $x \succeq y$), then the corresponding stabilizers satisfy the subgroup relation $\G_x \leq \G_y$ because any $g$ which fixes $x$ is forced to fix all the vertices of $x$ (and hence, $y$) pointwise. Second, the orbits assemble to form a quotient simplicial complex so that the natural projection from $\X$ is a simplicial map. And third, $\G$ acts transitively on the fibers of this map.  
\begin{defn}
{\em
The {\bf orbit space} or {\bf quotient} $\Quot$ associated to the action of $\G$ on $\X$ is the simplicial complex defined as follows. Its vertices are the $\G$-orbits of vertices in $\X$, and a $d$-simplex in $\Quot$ is spanned by $(d+1)$ distinct orbits $(\G v_0,\ldots, \G v_d)$ if and only if there exists some simplex $(u_0,\ldots,u_d)$ in $\X$ with $u_i \in \G v_i$ for each $i$.   
}
\end{defn}
By construction, there is a canonical surjective simplicial map $\pro_\G:\X \surj \Quot$, called the {\bf orbit map}, which sends each simplex of $\X$ to its $\G$-orbit in $\Quot$. And by the second regularity requirement, if two simplices $x$ and $x'$ of $\X$ satisfy $\pro_G(x) = \pro_G(x')$ then some $g \in \G$ satisfies $g \cd x = x'$.

\begin{defn}
{\em
A {\em fundamental domain} for the action of $\G$ on $\X$ is a subcomplex $\X' \subset \X$ which contains exactly one simplex from each orbit.
}
\end{defn}
Whenever a fundamental domain exists for a given action (as with the bow-tie from the Introduction), the quotient space $\X/\G$ may be viewed as a subcomplex of $\X$. However, not every action admits a fundamental domain.

\begin{ex} 
\label{ex:rot2splx}
{\em The cyclic group (on three elements) $\G = C_3$ acts by rotation on the subdivided standard $2$-simplex $\X$ shown below:

\begin{figure}[h!]
\includegraphics[scale=0.17]{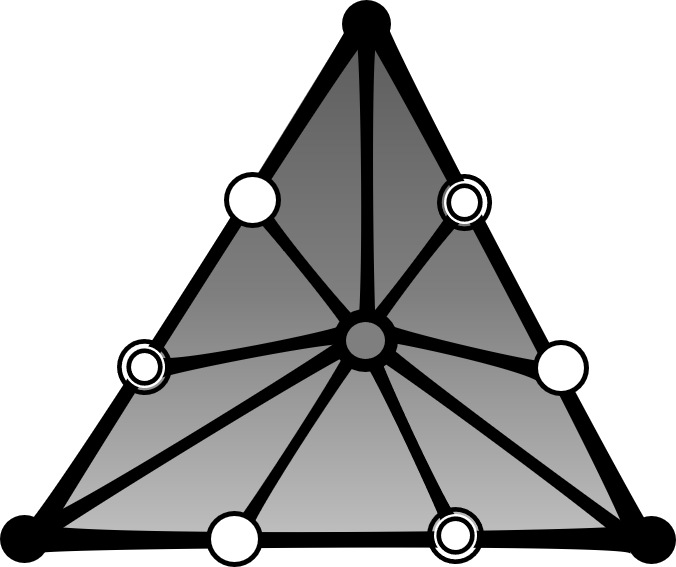}
\end{figure}

\noindent Since this action is regular, $\Quot$ inherits a natural simplicial complex structure (given by a coarser subdivision of the $2$-simplex):

\begin{figure}[h!]
\includegraphics[scale=0.2]{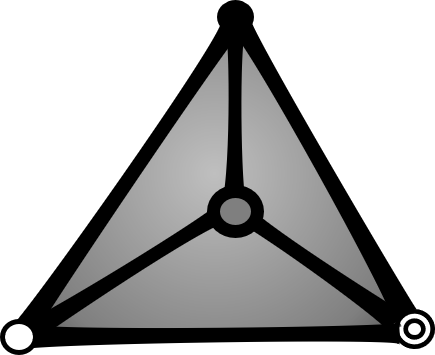}
\end{figure}

\noindent One can further illustrate $\pro_G$ as a simplicial map from $\X$ down to $\Quot$, so that the fiber $\pro_G\inv$ over a simplex in $\Quot$ is the collection of all simplices from $\X$ in the associated orbit:

\begin{figure}[h!]
\includegraphics[scale=0.3]{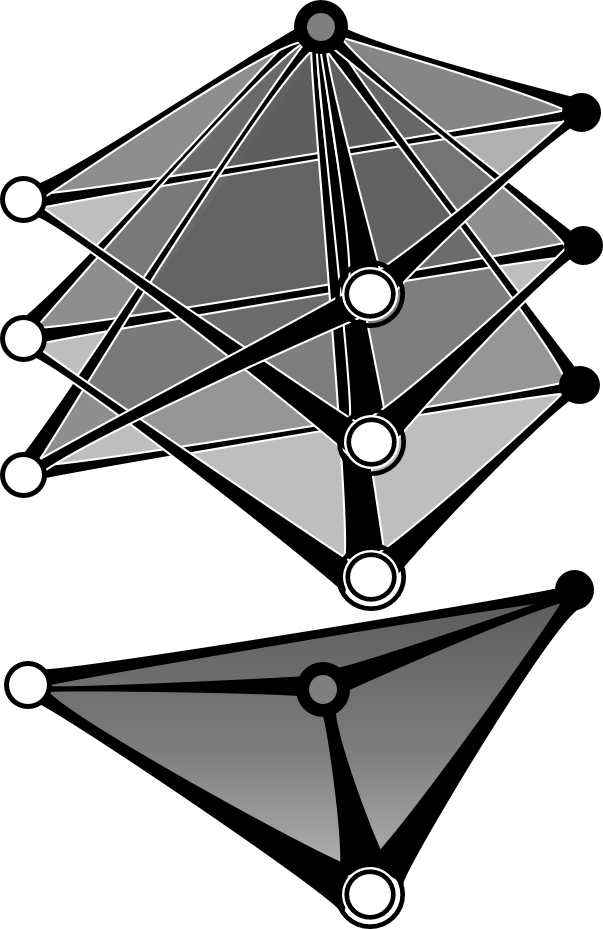}
\end{figure}

\noindent On the other hand, $\G$ also acts by rotation on the quotient $\Y = \Quot$, but this action fails to satisfy the second requirement of regularity. As a consequence, the successive quotient of $\Y$ by $\G$ is no longer a simplicial complex.
}
\end{ex}

\subsection{The 2-Category of Groups}

Consider a pair of groups $\G_0, \G_1$ and group homomorphisms $\phi, \phi': \G_0 \to \G_1$. We write $g: \phi \two \phi'$ if some group element $g \in \G_1$ relates $\phi$ and $\phi'$ by conjugation in the sense that 
\[
\phi = \Ad(g) \com \phi', \text{ i.e., } \phi(h) = g \cd \phi'(h) \cd g\inv \text{ for all }h \in \G_0. 
\] The homomorphisms $\G_0 \to \G_1$ thus form the objects of a groupoid\footnote{By {\em groupoid} we mean a small category all of whose morphisms are invertible.}, which we will denote by $\Grp(\G_0,\G_1)$ --- its morphisms are given by such $g:\phi \two \phi'$ and the composition law is inherited from multiplication in $\G_1$ as follows. Given
\[
\phi \stackrel{g}{\two} \phi' \stackrel{g'}{\two} \phi''
\]
where $\phi''$ is yet another homomorphism $\G_0 \to \G_1$, a straightforward calculation reveals that the product $g \cd g'$ in $\G_1$ satisfies $g\cd g': \phi \two \phi''$, and hence that $g\inv:\phi' \two \phi$ serves as the inverse to $g$. In order to formally distinguish a product in the groupoid $\Grp(\G_0,\G_1)$ from the corresponding (contravariant) product in the group $\G_1$, we will denote the former by $g' * g$ and call it the {\em vertical} composite of $g$ with $g'$. 

Given $g: \phi \two \phi'$ in $\Grp(\G_0, \G_1)$ and $h: \psi \two \psi'$ in $\Grp(\G_1,\G_2)$, define their {\em horizontal} composite $h \com g$ as the group element $\psi(g) \cd h$ in $\G_2$. Two straightforward calculations reveal the following:
\begin{enumerate}
	\item across any triple of groups $\G_0, \G_1$ and $\G_2$, horizontal composition yields a map of groupoids
	\[
	\com: \Grp(\G_1,\G_2) \times \Grp(\G_0,\G_1) \to \Grp(\G_0,\G_2),
	\]
	which extends the ordinary composition for group homomorphisms, and
	\item the horizontal and vertical compositions satisfy the {\bf interchange law}, meaning that across any diagram of the form
	\[
	\xymatrixcolsep{1in}
	\xymatrix
	{
		\G_0 \ar@/^2.5em/[r]|{\phi}_*+<1.1em>{\Downarrow g} \ar@{->}[r]|{\phi'}_*+<1em>{\Downarrow g'} \ar@/_2.5em/[r]|{\phi''}  & \G_1 \ar@/^2.5em/[r]|{\psi}_*+<1.1em>{\Downarrow h} \ar@{->}[r]|{\psi'}_*+<1em>{\Downarrow h'} \ar@/_2.5em/[r]|{\psi''} & \G_2,
	} 
	\]
	we have an equality
	\[
	(h' \com g') * (h \com g) = (h' * h) \com (g' * g)
	\]
	in the groupoid $\Grp(\G_0,\G_2)$.
\end{enumerate}
These two facts allow us to define a higher category of groups \cite[Ex 1.3]{martins-ferreira:06}, \cite[Def 8.1]{fiore:luck:sauer11}.
\begin{defn}
\label{def:2grp}
{\em
The {\bf 2-category of groups}, denoted $\Grp$, is defined as follows:
\begin{enumerate} 
\item its objects are all groups, 
\item the 1-morphisms $\phi:\G \to \G'$ are the usual group homomorphisms, and
\item the 2-morphisms $g: \phi \two \phi'$ are given by conjugation, i.e., $\phi = \Ad(g)\com\phi'$.
\end{enumerate}  
The 1-morphisms are endowed with the usual composition $\circ$ for group homomorphisms, while the horizontal and vertical composition of 2-morphisms is given by $\circ$ and $*$ respectively.
}
\end{defn}
In modern parlance, $\Grp$ is a {\em strict (2,1)-category}, which is to say that all compositions are associative, all identity elements are unique (rather than being defined only up to coherent isomorphisms), and all 2-morphisms are invertible. Dispensing with these subtleties in the interest of brevity, we will simply call $\Grp$ a 2-category henceforth.

\subsection{Complexes of Groups}

Variants of the following definition appeared simultaneously in the work of Haefliger \cite{haefliger:92} and Corson \cite{corson:92}.

\begin{defn}
\label{def:cogs}
{\em
A {\bf complex of groups} $\bA$ over a simplicial complex $\Y$  assigns to each
\begin{enumerate}
\item simplex $x$, a {\em finite} group $\bA_x$,
\item pair $x \succeq y$, an {\em injective} homomorphism $\bA_{x \succeq y}:\bA_x \inj \bA_{y}$, and
\item triple $x \succeq y \succeq z$, a 2-morphism $\bA_{x \succeq y \succeq z}:\bA_{y \succeq z}\com\bA_{x \succeq y} \two \bA_{x \succeq z}$,
\end{enumerate}
subject to the following constraints:
\begin{enumerate}
\item $\bA_{x\succeq x}$ is the identity  $\bA_x \to \bA_x$, 
\item $\bA_{x \succeq x \succeq y}$ and $\bA_{x \succeq y \succeq y}$ are identities $\bA_{x \succeq y} \two \bA_{x \succeq y}$, and
\item \label{eq:cocycle} for any simplices $w \succeq x \succeq y \succeq z$ in $\Y$, the following relation holds in the group $\bA_z$:
\begin{align*}
\bA_{y \succeq z}(\bA_{w \succeq x \succeq y}) \cd \bA_{w \succeq y \succeq z} = \bA_{x \succeq y \succeq z} \cd  \bA_{w \succeq x \succeq z}.
\end{align*}
\end{enumerate}
The last constraint is called the {\em cocycle condition} \cite[Ch III.C.2]{bridson:haefliger:99}.
}
\end{defn} 

In other words, a complex of groups is a pseudofunctor $\bA:\Fac(\Y) \to \Grp$ from the poset $\Fac(\Y)$ of simplices in $\Y$ (ordered by the is-a-face-of relation) to the 2-category of groups from Definition \ref{def:2grp}. This means that the homomorphism $\bA_{x \succeq z}$ only equals the expected composite $\bA_{y \succeq z} \com \bA_{x \succeq y}$ up to conjugation as prescribed by the 2-morphism $\bA_{x \succeq y \succeq z}$ (these 2-morphisms are called {\em twisting elements} in \cite{bridson:haefliger:99}). Similarly, $\bA$ need not send identity 2-morphisms in its domain to identities in its codomain. The cocycle condition, however, ensures associativity in the assignment of 2-morphisms by insisting that the following diagram commutes in the groupoid $\Grp(\bA_w,\bA_z)$:
\[
\xymatrixrowsep{.5in}
\xymatrixcolsep{.75in}
\xymatrix{
	\bA_{y \succeq z} \com \bA_{x \succeq y} \com \bA_{w \succeq x} \ar@{=>}[d]_-{\bA_{x\succeq y \succeq z}\circ \id~} \ar@{=>}[r]^-{\id \circ \bA_{w\succeq x \succeq y}} & \bA_{y \succeq z} \com \bA_{w \succeq y} \ar@{=>}[d]^-{~\bA_{w \succeq y \succeq z}} \\
	\bA_{x \succeq z} \com \bA_{w \succeq x} \ar@{=>}[r]_-{\bA_{w \succeq x \succeq z}}& \bA_{w \succeq z}
}
\]

We have restricted our attention here to complexes of {\em finite} groups for algorithmic reasons; the preceding definition and subsequent ones (can be made to) extend almost verbatim to bifunctors from the poset of open sets in a reasonable topological space to the 2-category of arbitrary groups. The finite complexes of groups defined above may thus be viewed as concrete, combinatorial incarnations of orbifolds \cite{moerdijk:pronk:97, moerdijk:pronk:99} and  topological Deligne-Mumford stacks \cite[Sec 19.5]{noohi:05}.

\begin{ex} 
{\em Given a finite group $\G$ (with identity element $1_\G$) and simplicial complex $\Y$, the {\bf constant} $\G$-valued complex of groups over $\Y$ is denoted $\cgog^\Y:\Fac(\Y) \to \Grp$, and defined as follows: 
\begin{enumerate}
\item each simplex $y$ is assigned the same group $\cgog^\Y_y = \G$, 
\item each pair $x \succeq y$ is assigned the identity homomorphism $\cgog^\Y_{x \succeq y} = \id: \G \to \G$, and 
\item each triple $x \succeq y \succeq z$ is assigned the identity $\cgog^\Y_{x \succeq y \succeq z} = 1_\G: \id \two \id$.
\end{enumerate}
}
\end{ex}

In order to compare complexes of groups, we require a convenient notion of morphisms between them (\cite[Def 2.5]{haefliger:92} and \cite[Def 12]{lim:thomas:08}).
\begin{defn}
{\em
A {\bf morphism} $\Phi:\bA \to \bB$ of complexes of groups over (the same simplicial complex) $\Y$ assigns
\begin{enumerate}
\item to each simplex $y$ a 1-morphism $\Phi_y:\bA_y \to \bB_y$ in $\Grp$, and
\item to each pair $x \succeq y$ a 2-morphism $\Phi_{x \succ y}: \Phi_y \com \bA_{x \succeq y} \two \bB_{x \succeq y}\com\Phi_x$ in $\Grp$:
\[
\xymatrixcolsep{0.7in}
\xymatrixrowsep{0.5in}
\xymatrix{
\bA_x \ar@{->}[d]_{\bA_{x\succeq y}}  \ar@{->}[r]^{\Phi_x}  & \bB_x   \ar@{->}[d]^{\bB_{x\succeq y}} \\
\bA_y \ar@{->}[r]_{\Phi_y} \ar@{=>}[ur]|-{\Phi_{x \succeq y}} & \bB_y
}
\]
\end{enumerate}
so that the following two axioms hold:
\begin{enumerate}
	\item the {\em identity} axiom requires $\Phi_{y \succeq y}$ to be the identity $\Phi_y \two \Phi_y$ for each simplex $y$, whereas
	\item \label{eq:coh} the {\em coherence} axiom imposes a relation
	\[
	\Phi_z(\bA_{x \succeq y \succeq z}) \cd \Phi_{x \succeq z} = \Phi_{y \succeq z} \cd \bB_{y \succeq z} (\Phi_{x \succeq y}) \cd  \bB_{x \succeq y \succeq z}.
	\]
	in the group $\bB_z$.
\end{enumerate}
}
\end{defn}

Thus, $\Phi$ is a pseudonatural transformation \cite[Sec 1.2]{leinster:98} between pseudofunctors $\bA,\bB:\Fac(\Y) \to \Grp$. The coherence axiom requires the commutativity of a certain pentagon in the groupoid $\Grp(\bA_x,\bB_z)$ for any three simplices $x \succeq y \succeq z$. We indicate its vertices below:
\[
\xymatrixcolsep{-.32in}
\xymatrixrowsep{0.25in}
\xymatrix{
 & \Phi_z \com \bA_{y \succeq z}\com \bA_{x \succeq y} \ar@{=>}[rr] \ar@{=>}[dl] & & \bB_{y \succeq z} \com \Phi_y \com \bA_{x \succeq y} \ar@{=>}[dr]  & \\
\Phi_z \com \bA_{x \succeq z} \ar@{=>}[drr] & & & &  \bB_{y \succeq z} \com \bB_{x \succeq y} \com \Phi_x \ar@{=>}[dll] \\
& & \bB_{x \succeq z} \com \Phi_x & &
}
\]
and encourage the reader to accurately decorate its edges. We call $\Phi:\bA \to \bB$ {\bf injective} if each $\Phi_y:\bA_y \to \bB_y$ is an injective group homomorphism. And if each $\Phi_y$ is an isomorphism between $\bA_y$ and $\bB_y$, then one can easily construct an inverse pseudofunctor $\bB \to \bA$, so in this case $\Phi$ itself is an {\bf isomorphism} between $\bA$ and $\bB$. 

\begin{rem}
\label{rem:injconst}
{\em
For our purposes here, the most important morphisms of complexes of groups over a simplicial complex $\Y$ will be the {\em injective ones from an arbitrary domain to a constant codomain}, i.e., $\Phi:\bA \to \cgog^\Y$ for some fixed group $\G$. In this special case, $\Phi$ assigns to each
\begin{enumerate} 
\item simplex $y$ an injective group homomorphism $\Phi_y:\bA_y \inj \G$, and 
\item pair $x \succeq y$ a 2-morphism $\Phi_{x \succeq y}:\Phi_y \com \bA_{x \succeq y} \two \Phi_x$ in $\Grp$\footnote{As in Def \ref{def:2grp},  $\Phi_{x \succeq y}$ is an element of $\G$ satisfying $\Ad(\Phi_{x \succeq y})\Phi_{x} = \Phi_y \com \bA_{x \succeq y}$.},
\end{enumerate} 
so that every $\Phi_{y \succ y}$ is the identity, and the following relation holds in $\G$ across all triples $x \succeq y \succeq z$:
\begin{align}
\label{eq:injcoh}
\Phi_z(\bA_{x \succeq y \succeq z}) \cd \Phi_{x \succeq z}  = \Phi_{y \succeq z} \cd  \Phi_{x \succeq y}.
\end{align}
The existence of such a $\Phi$ implies that all the $\bA_y$ are simultaneously and coherently realizable as subgroups of $\G$. In other words, $\G$ forms a cocone \cite[Ch III.4]{maclane:71} for the diagram in $\Grp$ parametrized by the functor $\bA$ over the poset $\Fac(\Y)$.
}
\end{rem}

In the sequel, we will compare not only complexes of groups, but also their morphisms. We achieve the latter by using the 2-categorical structure of $\Grp$ locally over each simplex $y$ in $\Y$ as follows. 
\begin{defn}
{\em
Given two morphisms $\Phi,\Psi:\bA \to \bB$ of complexes of groups  over $\Y$, a {\bf homotopy} $\theta: \Phi \two \Psi$ is a (contravariant) collection of 2-morphisms $\{\theta_y:\Psi_y \two \Phi_y\}$ in $\Grp$, indexed by simplices $y$ of $\Y$, so that for each face relation $x \succeq y$ the following square commutes:
\[
\xymatrixcolsep{0.7in}
\xymatrixrowsep{0.4in}
\xymatrix{
\Psi_y \com \bA_{x \succeq y} \ar@{=>}[r]^{\Psi_{x\succeq y}}  \ar@{=>}[d]_{\theta_y \circ \id} & \bB_{x \succeq y} \com \Psi_{x} \ar@{=>}[d]^{\id \circ \theta_{x}} \\
\Phi_y \com \bA_{x \succeq y} \ar@{=>}[r]_{\Phi_{x\succeq y}} & \bB_{x \succeq y} \com \Phi_{x} 
}
\]
In other words, we have a relation
\[
 \Psi_{x \succeq y} \cd  \bB_{x \succeq y}(\theta_x) =  \theta_{y} \cd \Phi_{x \succeq y}
\]
in the group $\bB_{y}$. 
}
\end{defn}
Horizontal and vertical compositions for homotopies are defined simplex-wise. Since such homotopies relate pseudonatural transformations between pseudofunctors, they form examples of {\em modifications} \cite[Sec 1.3]{leinster:98}. Complexes of groups over $\Y$, injective morphisms between them, and homotopies between those morphisms also form a higher category.
\begin{defn}
\label{def:2catcog}
{\em
The {\bf 2-category of complexes of groups over $\Y$}, written $\CG(\Y)$, is described by the following data:
\begin{enumerate}
\item its objects are all complexes of groups $\bA: \Fac(\Y) \to \Grp$ over $\Y$,
\item its 1-morphisms are all {\em injective} morphisms $\Phi:\bA \to \bB$, and
\item its 2-morphisms are all homotopies $\theta:\Phi \two \Psi$.
\end{enumerate}
}
\end{defn}

\section{Compression} \label{sec:comp}

Throughout this section, $\X$ is a finite, connected simplicial complex while $\G$ is a fixed subgroup of its regular automorphisms. Let $\Y$ denote the  orbit space $\Quot$ and $\pro_G:\X \to \Y$ the orbit map. Here we construct a pair $(\bA,\Phi)$, where
\begin{enumerate}
\item  $\bA:\Fac(\Y) \to \Grp$ is a complex of groups over $\Y$, and
\item $\Phi:\bA \to \cgog^\Y$ is an injective morphism to the constant complex of groups.
\end{enumerate}
Following \cite[Sec III.C.2.9]{bridson:haefliger:99}, we explicitly describe $\bA$ and $\Phi$ by making certain ad-hoc local choices. Fortunately, it turns out that different choices lead to isomorphic constructions.

\subsection{Lifts and Transfers}\label{ssec:liftrans} 

The surjectivity of $\pro_\G$ guarantees that each simplex $y$ of $\Y$ admits a {\em lift} in the sense that some simplex $x$ in $\X$ satisfies $\pro_\G(x) = y$. We select such lifts arbitrarily for all such $y$, emphasizing that face relations of the form $y \succeq y'$ in $\Y$ may not ascend to relations of the form $x \succeq x'$ in $\X$ among lifts. As a result, the collection of lifts (which is sometimes called a {\em $\G$-transversal \cite[Sec I.1.3]{dicks:dunwoody:89}}) might be quite far from forming a subcomplex of $\X$. The black simplices below form a complete and valid choice of lifts for the action from Example \ref{ex:rot2splx}, but they manifestly do not constitute a simplicial subcomplex:

\begin{figure}[h!]
\includegraphics[scale=0.35]{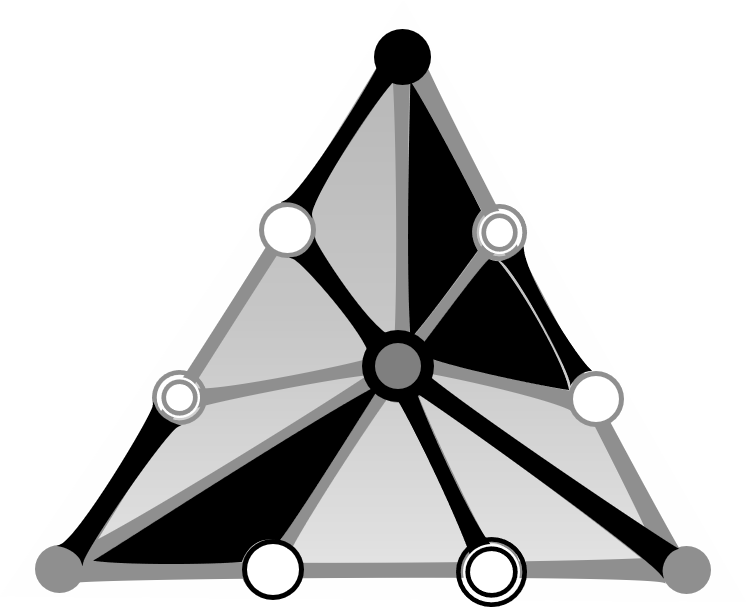}
\end{figure}

In order to address the defect illustrated above, one seeks to relate the lifts $x$ and $x'$ associated to pairs of distinct adjacent simplices $y \succeq y'$ in $\Y$. Let $z$ be the unique face of $x$ in $\X$ which satisfies $\pro_\G(z) = y'$, i.e., $z$ is the face whose vertices lie in orbits determined by $y'$. Since $\G$ acts regularly on $\X$, there exists an element $g \in \G$, not necessarily unique, so that $g \cd z = x'$. Thus, the best that one can expect in general is $x \succeq g\inv x'$ in $\X$ for some $g$ whenever $y \succeq y'$ holds in $\Y$. We arbitrarily select one such $g = g_{y \succeq y'}$, and call it the {\em transfer}\footnote{These transfers give rise to the monodromy elements of \cite{carbone:rips:13}.} associated to the relation $y \succeq y'$. On the other hand, if the lifts did happen to satisfy $x \succeq x'$ in $\X$, then one could simplify (computational) matters greatly by selecting the identity transfer $g_{y \succeq y'} = 1_\G$.

\subsection{Constructing $\bA$ and $\Phi$}\label{ssec:APhi} Given a choice of lifts and transfers corresponding to simplices and face relations in $\Y$, one may construct a complex of groups $\bA:\Fac(\Y) \to \Grp$ as follows: 
\begin{enumerate}
\item for each simplex $y$, the group $\bA_y$ is the stabilizer $\G_x$, where $x = x_y$ is chosen the lift of $y$,
\item for each pair $y \succeq y'$, the map $\bA_{y \succeq y'}:\bA_y \inj \bA_{y'}$ is $\Ad(g)$, where $g = g_{y \succeq y'}$ is the transfer selected for $y \succeq y'$, and
\item for each triple $y \succeq y' \succeq y''$, the map $\bA_{y \succeq y' \succeq y''}:\bA_{y' \succeq y''} \com \bA_{y \succeq y'} \two \bA_{y \succeq y''}$ is given by the following product of transfers in $\G$:
\[
 g_{y' \succeq y''} \cd g_{y \succeq y'} \cd g\inv_{y \succeq y''}.
\]
\end{enumerate}
To confirm that the above data prescribe a bona fide complex of groups, one must check that conjugation by $g_{y \succeq y'}$ maps $\G_x$ injectively to a subgroup of $\G_{x'}$ (it does), and that the 2-morphisms satisfy the cocycle condition from Definition \ref{def:cogs} (they do)\footnote{After performing straightforward manipulations, both sides of the cocycle equation evaluate to the pleasant expression  $g_{y'' \succ y'''} \cd g_{y' \succeq y''} \cd g_{y \succeq y'} \cd g\inv_{y \succeq y'''}$.}.

\begin{rem}{\em If $\G$ is abelian, then its conjugate subgroups are necessarily equal; in this case, all the 1-morphisms $\bA_{y \succeq y'}$ are just inclusions.}
\end{rem}

Turning to the matter of constructing an injective morphism $\Phi:\bA \to \cgog^\Y$, we assign
\begin{enumerate}
\item to each simplex $y$, the inclusion $\Phi_y: \bA_y \inj \G$ (noting that $\bA_y$ is the stabilizer of $y$'s lift, and hence naturally a subgroup of $\G$), and
\item to each face relation $y \succeq y'$, the 2-morphism $\Phi_{y \succeq y'}$ given by the transfer $g_{y \succeq y'} \in \G$.
\end{enumerate}
 We leave it to the reader to confirm that these assignments successfully produce an injective morphism $\Phi:\bA \to \cgog^\Y$ as described in Remark \ref{rem:injconst}.

\subsection{Context} In this section we quantify the impact of lifts and transfers on the construction of $(\bA,\Phi)$. Let $\CG(\Y)$ be the 2-category from Definition \ref{def:2catcog}. The following {\em fiber 2-category} \cite[Sec 3]{cegarra:11} forms a natural home for pairs like the $(\bA,\Phi)$ constructed above.

\begin{defn}
\label{def:fibcat}
{\em
By $\CG(\Y) \fiber \cgog^\Y$ we denote the { fiber 2-category} whose
\begin{enumerate}
\item objects are all pairs $(\bB,\Psi)$, where $\bB$ is an object of $\CG(\Y)$ while $\Psi:\bB \to \cgog^\Y$ is a morphism, 
\item 1-morphisms $(\bB,\Psi) \to (\bB',\Psi')$ are all pairs $(\Delta,\eta)$ where $\Delta:\bB \to \bB'$ is a 1-morphism in $\CG(\Y)$ while $\eta: \Psi' \com \Delta \two \Psi$ is a homotopy: 
\[
\xymatrixrowsep{0.3in}
\xymatrixcolsep{0.27in}
\xymatrix{
\bB \ar@{->}[rr]^-{\Delta}_*+<.35em>{\stackrel{\eta}{\Longleftarrow}} \ar@{->}[dr]_-{\Psi} & & \bB' \ar@{->}[dl]^-{\Psi'} \\
& \cgog^\Y & 
}
\]
\item 2-morphisms $(\Delta,\eta) \two (\Delta',\eta')$ are given by all those homotopies $\theta:\Delta' \two \Delta$ in $\CG(\Y)$ which make the following diagram commute:
\[
\xymatrixrowsep{0.3in}
\xymatrixcolsep{0.27in}
\xymatrix{
\Psi' \com \Delta \ar@{=>}[dr]_-{\eta} & & \Psi' \com \Delta' \ar@{=>}[dl]^-{\eta'} \ar@{=>}[ll]_-{\id \circ \theta} \\
& \Psi & 
}
\]
\end{enumerate}
}
\end{defn}

\begin{proposition}
Let $(\bA,\Phi)$ and $(\bA',\Phi')$ be produced by different choices of lifts and transfers. Then, the following hold in $\CG(\Y) \fiber \cgog^\Y$:
\begin{enumerate}
\item there exists an isomorphism $(\Delta,\eta):(\bA,\Phi) \to (\bA',\Phi')$ so that each $\Delta_y:\bA_y \to \bA'_y$ is given by conjugation by some element of $\G$, and
\item given any two such isomorphisms, there exists a unique homotopy $\theta:(\Delta,\eta) \two (\Delta',\eta')$.
\end{enumerate}
\end{proposition}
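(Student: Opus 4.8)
The plan is to produce $(\Delta,\eta)$ by exactly the same recipe of ad-hoc choices that built $(\bA,\Phi)$ and $(\bA',\Phi')$, and then to exploit the fact that a map into the constant $\cgog^\Y$ leaves essentially no room for distinct 2-morphisms.

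\emph{Part (1).} Write $x_y,x'_y$ for the chosen lifts of a simplex $y$ and $g_{y\succeq y'},g'_{y\succeq y'}$ for the chosen transfers in the two systems. Regularity makes $\G$ act transitively on each fiber of $\pro_\G$, so I would first pick $h_y\in\G$ with $h_y\cd x_y=x'_y$ for every $y$. Since $\G_{h_y\cd x_y}=h_y\,\G_{x_y}\,h_y\inv$, the inner automorphism $\Ad(h_y)$ restricts to an isomorphism $\Delta_y\colon\bA_y=\G_{x_y}\to\G_{x'_y}=\bA'_y$. I would take $\Delta_{y\succeq y'}:=h_{y'}\cd g_{y\succeq y'}\cd h_y\inv\cd(g'_{y\succeq y'})\inv$; because $h_y$ sends the unique face of $x_y$ lying over $y'$ to the corresponding face of $x'_y$, this element fixes $x'_{y'}$, hence lies in $\bA'_{y'}$ and realizes a 2-morphism $\Delta_{y'}\com\bA_{y\succeq y'}\two\bA'_{y\succeq y'}\com\Delta_y$. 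One then checks the coherence axiom for $\Delta$ (both sides collapse, after cancellation, to $h_{y''}\cd g_{y'\succeq y''}\cd g_{y\succeq y'}\cd h_y\inv\cd(g'_{y\succeq y''})\inv$). Setting $\eta_y:=h_y\inv$, the conjugation relation $\Phi_y=\Ad(\eta_y)\com(\Phi'\com\Delta)_y$ is immediate, and the homotopy square for $\eta$ at $y\succeq y'$ — after expanding $(\Phi'\com\Delta)_{y\succeq y'}=\Delta_{y\succeq y'}\cd g'_{y\succeq y'}$ — reduces to $g_{y\succeq y'}\cd h_y\inv=h_{y'}\inv\cd(h_{y'}\cd g_{y\succeq y'}\cd h_y\inv)$. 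Finally $(\Delta,\eta)$ is an isomorphism in $\CG(\Y)\fiber\cgog^\Y$: each $\Delta_y$ is a group isomorphism, so $\Delta$ is invertible in $\CG(\Y)$, and every homotopy is invertible because every 2-morphism of $\Grp$ is.

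\emph{Part (2).} Given two isomorphisms $(\Delta,\eta)$ and $(\Delta',\eta')$ produced as in part (1), say from families $\{h_y\}$ and $\{h'_y\}$, I would unwind what a 2-morphism $\theta\colon(\Delta,\eta)\two(\Delta',\eta')$ is: a homotopy $\theta\colon\Delta'\two\Delta$ whose components obey the fiber-triangle identity, which spells out as $\Phi'_y(\theta_y)=\eta_y\inv\,\eta'_y$ in $\G$. Since $\Phi'_y$ is injective, this pins $\theta_y$ down, so uniqueness is immediate. For existence, note that $\eta_y\inv=h_y$ and $(\eta'_y)\inv=h'_y$ both carry $x_y$ to $x'_y$, whence $\eta_y\inv\,\eta'_y=h_y(h'_y)\inv$ fixes $x'_y$ and therefore lies in $\bA'_y=\G_{x'_y}$; I take $\theta_y$ to be this element (equivalently its $\Phi'_y$-preimage). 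Two short verifications remain: that $\Delta_y=\Ad(\theta_y)\com\Delta'_y$, which follows from $\Delta_y=\Ad(h_y)$ and $\Delta'_y=\Ad(h'_y)$; and that the homotopy square for $\theta$ holds at each $y\succeq y'$, which — after substituting the formulas for $\Delta_{y\succeq y'},\Delta'_{y\succeq y'}$ and using $\bA'_{y\succeq y'}=\Ad(g'_{y\succeq y'})$ — cancels to a tautology.

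I expect the main obstacle to be the bookkeeping in part (1): one has to track carefully how the chosen elements $h_y$ interact with the faces of the lifts in order to confirm both that $\Delta_{y\succeq y'}$ genuinely lands in $\bA'_{y'}$ and that the coherence axiom is satisfied. Once that is in place, part (2) is essentially formal — the fiber triangle forces uniqueness, and the homotopy square for $\theta$ is a routine consequence of those for $\eta$ and $\eta'$.
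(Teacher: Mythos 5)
Your proposal is correct and follows the paper's argument essentially verbatim: you choose elements $h_y$ carrying one system of lifts to the other, set $\Delta_y=\Ad(h_y)$, $\Delta_{y\succeq y'}=h_{y'}\cd g_{y\succeq y'}\cd h_y\inv\cd (g'_{y\succeq y'})\inv$, $\eta_y=h_y\inv$ and $\theta_y=h_y\cd(h'_y)\inv$, which are exactly the paper's formulas (with its $k_y,\ell_y,h_{y\succeq y'}$ renamed). Your extra verifications (membership of $\Delta_{y\succeq y'}$ in $\bA'_{y'}$, the coherence and homotopy squares, and uniqueness of $\theta$ via injectivity of $\Phi'_y$) only spell out checks the paper leaves implicit.
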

\begin{proof}
Let $\{x_y,g_{y \succeq y'}\}$ and $\{z_y,h_{y \succeq y'}\}$ be two choices of lifts and transfers used to construct $\bA$ and $\bA'$ respectively. For each simplex $y$ of $\Y$, the regularity of the $\G$-action on $\X$ guarantees the presence of some $k_y \in \G$ so that $k_y \cd x_y=z_y$. Given such $k_y$, define an isomorphism $\Delta:\bA \to \bA'$ in $\CG(\Y)$ as follows: the group isomorphism $\Delta_y:\bA_y \to \bA'_y$ is $\Ad(k_y)$, while the 2-morphism $\Delta_{y \succeq y'}$ is prescribed by the element
\[
k_{y'}\cd g_{y \succeq y'} \cd k\inv_{y} \cd h\inv_{y \succeq y'}
\] in the group $\bA'_{y'}$. A homotopy $\eta:\Phi' \com \Delta \two \Phi$ is now obtained by setting $\eta_y = k\inv_y$ for each simplex $y$. Given another isomorphism $(\Delta',\eta'):(\bA,\Phi) \to (\bA',\Phi')$ generated by different choices $\{\ell_y\}$ instead of $\{k_y\}$, the desired unique homotopy $\theta:(\Delta,\eta) \two (\Delta',\eta')$ is determined completely by setting $\theta_y = k_y\cd \ell\inv_y$.
\end{proof}


\section{Reconstruction} \label{sec:recon}

We assume throughout this section that $\Y$ is a fixed finite simplicial complex and that $(\bA,\Phi)$ is a distinguished object in the fiber 2-category $\CG(\Y) \fiber \cgog^\Y$ from Definition \ref{def:fibcat}. Our goal here is to construct a simplicial complex $\X$ and a regular action of $\G$ on $\X$ that satisfies three natural criteria:
\begin{enumerate}
\item the quotient $\Quot$ is isomorphic to $\Y$, 
\item compressing this $\G$-action produces $(\bA,\Phi)$, and
\item $\X$ is unique up to $\G$-equivariant simplicial isomorphism.
\end{enumerate}
The last criterion above is a universal property: it asserts that any other $\X'$ satisfying the first two properties admits an invertible simplicial map $\bs:\X \to \X'$ so that 
\[
\bs(g \cd x) = g \cd \bs(x)
\] for each simplex $x$ in $\X$ and group element $g$ in $\G$. 

The process of unfolding $(\bA,\Phi)$ in order to extract both $\X$ and the concomitant $\G$-action is of central importance in the theory of complexes of groups, to the extent that it has been called {\em the Basic Construction} in \cite[Thm III.C.2.13]{bridson:haefliger:99} and also in \cite[Ch 5]{davis:08}.

\subsection{The Basic Construction}\label{ssec:basic}

For each simplex $y$ of $\Y$, let $\cst_y: \G \to \G/\Phi_y(\bA_y)$ be the canonical surjective map (of sets) that sends each group element $g$ to the corresponding left coset $g\cd \Phi_y(\bA_y)$. Consider the set $P$ of pairs
\[
P = \{(y,\cst_y(g)) \mid y \text{ is a simplex of } \Y \text{ and } g \in \G\},
\]
equipped with a binary relation $\btrt$,  defined as follows:
\[
(y,\cst_y(g)) \btrt (y',\cst_{y'}(g')) \quad \text{if} \quad y \succeq y' \text{ and } \cst_{y'}(g') = \cst_{y'}(g\cd \Phi\inv_{y \succeq y'}).
\]

\begin{proposition}
The relation $\btrt$ forms a well-defined partial order on $P$.
\end{proposition}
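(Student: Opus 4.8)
The plan is to verify, in order, that $\btrt$ is a well-defined relation on $P$ and then that it satisfies the three poset axioms. Two algebraic facts will be used throughout: (a) since $\Phi_{y \succeq y'} \in \G$ is a $2$-morphism $\Phi_{y'} \com \bA_{y \succeq y'} \two \Phi_y$, it satisfies $\Phi_{y \succeq y'} \cd \Phi_y(a) \cd \Phi\inv_{y \succeq y'} = \Phi_{y'}(\bA_{y \succeq y'}(a))$ for every $a \in \bA_y$, so conjugation by $\Phi_{y \succeq y'}$ carries $\Phi_y(\bA_y)$ into $\Phi_{y'}(\bA_{y'})$; and (b) the coherence identity \eqref{eq:injcoh}, which for a triple $y \succeq y' \succeq y''$ reads $\Phi_{y''}(\bA_{y \succeq y' \succeq y''}) \cd \Phi_{y \succeq y''} = \Phi_{y' \succeq y''} \cd \Phi_{y \succeq y'}$ in $\G$. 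I will also use that $\Phi_{y \succeq y} = 1_\G$ (the identity axiom) and that $\succeq$ is a genuine partial order on the simplices of $\Y$. For well-definedness I must check that the condition cutting out $\btrt$ does not depend on the chosen coset representatives. Independence from the representative $g'$ is automatic. For $g$, replacing it by $g \cd \Phi_y(a)$ with $a \in \bA_y$ turns $g \cd \Phi\inv_{y \succeq y'}$ into $g \cd \Phi\inv_{y \succeq y'} \cd \bigl(\Phi_{y \succeq y'} \Phi_y(a) \Phi\inv_{y \succeq y'}\bigr)$, and by (a) the parenthesised element lies in $\Phi_{y'}(\bA_{y'})$, leaving the coset $\cst_{y'}(g \cd \Phi\inv_{y \succeq y'})$ unchanged.

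Reflexivity is then immediate from $\Phi_{y \succeq y} = 1_\G$. For antisymmetry, if $(y, \cst_y(g)) \btrt (y', \cst_{y'}(g'))$ and $(y', \cst_{y'}(g')) \btrt (y, \cst_y(g))$, the two face relations $y \succeq y'$ and $y' \succeq y$ force $y = y'$; substituting $\Phi_{y \succeq y} = 1_\G$ into the defining relation then collapses it to $\cst_y(g') = \cst_y(g)$, so the two pairs coincide.

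The substantive step, and the one I expect to be the main obstacle, is transitivity, where the cocycle structure is essential. Given $(y, \cst_y(g)) \btrt (y', \cst_{y'}(g'))$ and $(y', \cst_{y'}(g')) \btrt (y'', \cst_{y''}(g''))$, we have $y \succeq y''$ and may write $g' = g \cd \Phi\inv_{y \succeq y'} \cd \Phi_{y'}(a)$ for some $a \in \bA_{y'}$. I would first use (a) for the relation $y' \succeq y''$ to commute $\Phi_{y'}(a)$ to the right past $\Phi\inv_{y' \succeq y''}$, producing a factor in $\Phi_{y''}(\bA_{y''})$ that is absorbed by $\cst_{y''}$; this reduces $\cst_{y''}(g'')$ to $\cst_{y''}\bigl(g \cd \Phi\inv_{y \succeq y'} \cd \Phi\inv_{y' \succeq y''}\bigr)$. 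I would then invert (b) to rewrite $\Phi\inv_{y \succeq y'} \cd \Phi\inv_{y' \succeq y''} = \Phi\inv_{y \succeq y''} \cd \Phi_{y''}(\bA_{y \succeq y' \succeq y''})\inv$; since $\bA_{y \succeq y' \succeq y''} \in \bA_{y''}$, the trailing factor again lies in $\Phi_{y''}(\bA_{y''})$ and is absorbed, leaving $\cst_{y''}(g'') = \cst_{y''}(g \cd \Phi\inv_{y \succeq y''})$, which is exactly $(y, \cst_y(g)) \btrt (y'', \cst_{y''}(g''))$. The only real difficulty is keeping the bookkeeping straight: one must commute the various images of the $\Phi_{y}$'s past the transfer elements in the correct order, and the punchline is that \eqref{eq:injcoh} is precisely the identity that makes the two descents from $y$ to $y''$ agree modulo $\Phi_{y''}(\bA_{y''})$.
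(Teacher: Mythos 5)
Your proof is correct and follows essentially the same route as the paper's: well-definedness via the conjugation identity $\Phi_{y\succeq y'}\cd\Phi_y(a)\cd\Phi\inv_{y\succeq y'}=\Phi_{y'}(\bA_{y\succeq y'}(a))$, and transitivity by absorbing the stabilizer factor and then applying the coherence relation \eqref{eq:injcoh} to identify the two descents modulo $\Phi_{y''}(\bA_{y''})$. The only difference is cosmetic: you spell out reflexivity, antisymmetry, and the conjugation bookkeeping that the paper dispatches by citing well-definedness.
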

\begin{proof}
To see that $\btrt$ is well-defined on $P$, note by Remark \ref{rem:injconst} that for any $k$ in $\bA_y$, we have an equality between 
$\Ad(\Phi_{y \succeq y'}) \com \Phi_y(k)$ and $\Phi_{y'} \com \bA_{y \succeq y'}(k)$. Recalling that $\bA_{y \succeq y'}$ takes values in the group $\bA_{y'}$, it follows that 
\[
\cst_{y'}(\Phi\inv_{y \succeq y'}) = \cst_{y'}(\Phi_y(k) \cd \Phi\inv_{y \succeq y'}).
\] Therefore, for any $g$ in $\G$, we have an equality of cosets \[ \cst_{y'}(g\cdot\Phi_{y \succ y'}\inv) = \cst_{y'}(g\cdot\Phi_y(k)\cdot\Phi_{y \succ y'}\inv),\] and so $\btrt$ does not depend on our choice of $k$. Since both the reflexivity and antisymmetry of $\btrt$ are straightforward, it remains to establish its transitivity. To this end, assume we have a string of two $\btrt$-relations:
\[
(y_0,\cst_{y_0}(g_0)) \btrt (y_1,\cst_{y_1}(g_1)) \btrt (y_2,\cst_{y_2}(g_2)). 
\] By definition of $\btrt$, we obtain $y_0 \succeq y_1$ and $y_1 \succeq y_2$ in $\Y$, whence $y_0 \succeq y_2$. Moreover, using the second and first $\btrt$-relation above (in that order), we have
\begin{align*}
\cst_{y_2}(g_2) &= \cst_{y_2}(g_1 \cd \Phi\inv_{y_1 \succeq y_2}), \\
				&= \cst_{y_2}(g_0 \cd \Phi\inv_{y_0 \succeq y_1} \cd \Phi_{y_1}(h) \cd \Phi\inv_{y_1 \succeq y_2}),
\end{align*}
for some $h \in \bA_{y_1}$. By the well-definedness of $\btrt$, the coset above does not depend on $h$ and we can simplify it to
\[
\cst_{y_2}(g_2) = \cst_{y_2}(g_0 \cd \Phi\inv_{y_0 \succeq y_1} \cd \Phi\inv_{y_1 \succeq y_2}).
\]
By (\ref{eq:injcoh}) we have  
\[
\Phi\inv_{y_0 \succeq y_1} \cd \Phi\inv_{y_1 \succeq y_2} = \Phi\inv_{y_0 \succeq y_2} \mod \Phi_{y_2}(\bA_{y_2}),
\] so we obtain the desired relation $(y_0,\cst_{y_0}(g_0)) \btrt (y_2,\cst_{y_2}(g_2))$. 
\end{proof}

In fact, $(P,\btrt)$ is the poset of simplices $\Fac(\X)$ associated to our desired simplicial complex $\X$. Each pair $x = (y,\cst_y(g))$ of $P$ constitutes a simplex of dimension $\dim(y)$ in $\X$, and its faces are given by all pairs of the form $x' = (y',\cst_{y'}(g\cd\Phi\inv_{y \succeq y'}))$ where $y'$ is a face of $y$ in $\Y$. We leave it to the reader to verify that the assignments $(y,\cst_y(g)) \mapsto (y,\cst_y(h\cd g))$ parametrized by group elements $h \in \G$ yield a regular group action of $\G$ on $\X$ with quotient $\Y$. The associated orbit map $\pro_\G:\X \to \Y$ is simply given by projecting onto the first factor, i.e., $\pro_\G(y,\cst_y(g)) = y$.

\subsection{Context}
To establish that the $\G$-action on $\X$ described above will produce $(\bA,\Phi)$ when compressed along the lines of Sec \ref{sec:comp}, one chooses 
\begin{enumerate} 
\item the lift $(y,\cst_y(1_\G))$ for each simplex $y$ (where $1_\G$ is the identity element of $\G$), 
\item and the transfer $\Phi_{y \succeq y'}$ for each face relation $y \succeq y'$. 
\end{enumerate}
It remains to show that this $\G$-action on $\X$ satisfies the universal property (involving uniqueness up to equivariant isomorphism) mentioned at the beginning of this section. For our purpose, it is convenient to work not directly with the action, but rather with equivalent information contained in the orbit map $\pro_\G:\X \to \Y$. By the regularity assumption from Definition \ref{def:reg}, $\pro_\G$ forms a {\em stratified cover}\footnote{By this we mean that the fiber of $\pro_\G$ over any simplex $y$ of $\Y$ forms a nonempty finite-sheeted covering space in the sense of \cite[Ch 1.1]{hatcher:02}.} of $\Y$ so that $\G$ acts transitively on each fiber $\pro_\G\inv(y)$. We will call such maps the {\bf stratified $\G$-covers of $\Y$}, and note that they correspond bijectively with regular simplicial group actions whose orbit space is $\Y$. 

\begin{defn}
{\em
The category of stratified $\G$-covers of $\Y$, denoted $\EC(\Y)$, is defined by the following data:
\begin{enumerate}
\item its objects are all pairs $(\Z,\bq)$ consisting of finite simplicial complexes $\Z$ and stratified $\G$-covers $\bq:\Z \to \Y$,
\item the morphisms $(\Z,\bq) \to (\Z',\bq')$ are all simplicial maps $\bs: \Z \to \Z'$ which make the evident triangle\footnote{If we regard $\G$ as acting trivially on $\Y$, then this triangle commutes $\G$-equivariantly in the category of simplicial complexes.} commute:
\[
\xymatrixrowsep{0.3in}
\xymatrixcolsep{0.27in}
\xymatrix{
\Z \ar@{->}[dr]_-{\bq} \ar@{->}[rr]^{\bs} & & \Z' \ar@{->}[dl]^-{\bq'}  \\
& \Y & 
}
\]
Morphisms are composed in the usual manner for simplicial maps.
\end{enumerate}
}
\end{defn}
The following result establishes the desired universal property of the basic construction.

\begin{proposition}
\label{prop:bascon}
Any stratified $\G$-cover $\bq:\Z \to \Y$ for which the associated $\G$-action on $\Z$ produces the compressed data $(\bA,\Phi)$ is isomorphic to $\pro_\G:\X \to \Y$ in $\EC(\Y)$. 
\end{proposition}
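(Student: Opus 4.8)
The plan is to produce an explicit isomorphism in $\EC(\Y)$ from the orbit map $\pro_\G:\X\to\Y$ of the basic construction (Section~\ref{ssec:basic}) to the given $\bq:\Z\to\Y$. The first task is to unpack the hypothesis. To say that the $\G$-action on $\Z$ ``produces $(\bA,\Phi)$'' means that some (equivalently, by the Proposition of Section~\ref{sec:comp}, every) choice of lifts $z_y\in\bq\inv(y)$ and transfers $h_{y\succeq y'}\in\G$ for this action yields, after the compression procedure, a pair isomorphic to $(\bA,\Phi)$ in $\CG(\Y)\fiber\cgog^\Y$. I would first absorb that fiber-isomorphism $(\Delta,\eta)$ into a re-choice of lifts and transfers: each $\eta_y\in\G$ conjugates the inclusion $\G_{z_y}\inj\G$ into $\Phi_y$ (through the group isomorphism $\Delta_y$), so replacing $z_y$ by $\eta_y\cd z_y$ and correcting the transfers by $h_{y\succeq y'}\mapsto\eta_{y'}\cd h_{y\succeq y'}\cd\eta_y\inv$ produces a choice with $\G_{z_y}=\Phi_y(\bA_y)$ as subgroups of $\G$ and $h_{y\succeq y'}=\Phi_{y\succeq y'}$; here one invokes the coherence axioms satisfied by $(\Delta,\eta)$, together with the fact that transfers are in any case only determined modulo $\G_{z_{y'}}$. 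This is the argument of the Proposition of Section~\ref{sec:comp} run in reverse. Henceforth I assume the lifts and transfers of $\Z$ have been normalized in this way.

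With the normalization in hand, define $\bs$ on simplices by $\bs(y,\cst_y(g))=g\cd z_y$. The bulk of the verification is then routine. \emph{Well-definedness}: if $\cst_y(g)=\cst_y(g')$ then $g\inv g'\in\Phi_y(\bA_y)=\G_{z_y}$, so $g\cd z_y=g'\cd z_y$. \emph{Equivariance and compatibility over $\Y$}: one checks $\bs(h\cd(y,\cst_y(g)))=hg\cd z_y=h\cd\bs(y,\cst_y(g))$ and $\bq(g\cd z_y)=\bq(z_y)=y=\pro_\G(y,\cst_y(g))$. \emph{Bijectivity}: $\bs$ carries the fiber $\pro_\G\inv(y)=\{(y,\cst_y(g))\mid g\in\G\}$ into $\bq\inv(y)$, and there it is exactly the orbit--stabilizer bijection $\G/\G_{z_y}\xrightarrow{\ \sim\ }\G\cd z_y=\bq\inv(y)$, using that $\bq$ is a stratified $\G$-cover (so $\G$ acts transitively on $\bq\inv(y)$ with point-stabilizer $\G_{z_y}$); assembling over all $y$ gives a bijection on simplex sets.

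It then remains to check that both $\bs$ and $\bs\inv$ preserve face relations. By the definition of $\btrt$, the faces of $(y,\cst_y(g))$ in $\X$ are precisely the pairs $(y',\cst_{y'}(g\cd\Phi\inv_{y\succeq y'}))$ ranging over faces $y\succeq y'$ in $\Y$; applying $\bs$ and invoking the transfer relation $z_y\succeq\Phi\inv_{y\succeq y'}\cd z_{y'}$ (valid since $h_{y\succeq y'}=\Phi_{y\succeq y'}$), we obtain $g\cd z_y\succeq g\cd\Phi\inv_{y\succeq y'}\cd z_{y'}$ in $\Z$, so $\bs$ preserves faces. For the converse, recall that a stratified cover restricts to a simplicial isomorphism from each simplex of $\Z$ onto its image; in particular the faces of $z_y$ correspond bijectively with those of $y$, and the unique face of $z_y$ lying over a given $y'$ must be $\Phi\inv_{y\succeq y'}\cd z_{y'}$ (again by the transfer relation). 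Consequently the faces of $\bs(y,\cst_y(g))=g\cd z_y$ are exactly the $\bs$-images of the faces of $(y,\cst_y(g))$, so $\bs\inv$ preserves faces as well. Thus $\bs$ is an order isomorphism $\Fac(\X)\xrightarrow{\ \sim\ }\Fac(\Z)$, hence a simplicial isomorphism, and since it commutes with $\pro_\G$ and $\bq$ it is an isomorphism $\pro_\G\to\bq$ in $\EC(\Y)$. As a bonus, $\bs$ is $\G$-equivariant, which recovers the stronger uniqueness-up-to-equivariant-isomorphism asserted at the start of Section~\ref{sec:recon}.

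The one genuinely delicate step is the opening normalization: promoting ``produces $(\bA,\Phi)$'' from an abstract isomorphism in the fiber 2-category to an honest equality of stabilizer subgroups and transfers inside $\G$, which requires unwinding the homotopy $\eta$ against the coherence axioms and checking that translating the lifts by the $\eta_y$ is consistent with the corrected transfers, up to the ambiguity $\G_{z_{y'}}$ inherent in transfers. Everything downstream is coset bookkeeping resting on two facts already in hand: the orbit--stabilizer count, and the fact that a stratified cover restricts to an isomorphism on each simplex.
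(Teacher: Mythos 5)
Your proof is correct and takes essentially the same route as the paper: the same explicit map $\bs(y,\cst_y(g)) = g\cd z_y$, verified to be well defined, bijective, $\G$-equivariant, fiber-preserving and simplicial. The paper reads the hypothesis as saying that some choice of lifts and transfers for the $\G$-action on $\Z$ literally yields $(\bA,\Phi)$, so your opening normalization step (and your explicit check that $\bs\inv$ also preserves face relations) amount to extra care layered on the same argument rather than a different one.
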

\begin{proof}
Given a simplex $(y,\cst_y(g))$ of $\X$, let $z = z_y$ be the lift of $y$ used to generate $(\bA,\Phi)$ as described in Sec \ref{sec:comp}, so in particular $\bq(z) = y$ and the group $\bA_y$ equals the stabilizer $\G_z$. Let $\bs:\X \to \Z$ be the assignment $(y,\cst_y(g)) \mapsto g\cd z$, which we claim is the desired isomorphism. There are several properties to check, but all follow from routine calculations.
\begin{enumerate}

\item To see that $\bs$ is {\em well-defined}, note that if $\cst_y(g) = \cst_y(g')$, then $g = g'\cd k$ for some $k$ in the stabilizer $\G_z$ (recall that $\Phi_y$ is the inclusion $\G_z \inj \G$). Now,
$g \cd z = g' \cd k \cd z = g' \cd z$, so the image of $(y,\cst_y(g))$ depends only on the coset and not on $g$. 

\item Similarly, if two simplices $(y,\cst_y(g))$ and $(y',\cst_{y'}(h))$ of $\X$ are sent by $\bs$ to the same simplex $z'$ of $\Z$, then we must have $y = y' = \bq(z')$ and $z' = g\cd z = h \cd z$, which implies $\cst_y(g) = \cst_y(h)$ and hence that $\bs$ is {\em injective}.

\item Given an arbitrary simplex $z'$ in $\Z$, let $y = \bq(z)$ have lift $z$. Since $\bq$ is a stratified $\G$-cover, $\G$ acts transitively on $\bq\inv(y)$ and so there is some $h$ in $\G$ for which $h \cd z = z'$. By definition, we have $\bs(y,\cst_y(h)) = h\cd z = z'$, so $\bs$ is {\em surjective}.

\item To see that $\bs$ is {\em $\G$-equivariant}, pick $h$ in $\G$ and note that 
\[
h\cd (y,\cst_y(g)) = (y,\cst_y(h\cd g)),
\] whose image under $\bs$ is the desired $h \cd g \cd z = h \cd \bs(y,\cst_y(g))$.

\item To see that $\bs$ is a {\em simplicial} map, consider a relation $(y,\cst_y(g)) \btrt (y',\cst_{y'}(g'))$ in $\X$, so we have $y \succeq y'$ in $\Y$ and $g\cd \Phi\inv_{y \succeq y'} = g'\cd \Phi_{y'}(k)$ for some $k$ in $\bA_{y'}$. But since $\Phi_{y'}$ is just the inclusion $\bA_{y'}\inj\G$, we have $\Phi_{y'}(k) = k$. Let $z$ and $z'$ denote the lifts of $y$ and $y'$, so $\bA_{y'}$ is the stabilizer $\G_{z'}$ (whence $k$ fixes $z'$). By definition, the transfer $\Phi_{y \succeq y'}$ satisfies $z \succeq \Phi_{y\succeq y'}\inv \cd z'$. Now,
\[
z \succeq \Phi_{y\succeq y'}\inv \cd z' = g\inv \cd g' \cd k \cd z' = g\inv \cd g' \cd z',
\] 
so in fact $\bs(y,\cst_y(g)) = g \cd z$ admits $\bs(y',\cst_{y'}(g')) = g' \cd z'$ as a face.

\end{enumerate}
And finally, since $\bs:\X \to \Z$ preserves fibers, (i.e., maps $\pro_\G\inv(y)$ to $\bq\inv(y)$ for each simplex $y$ of $\Y$), we also have $\bq \com \bs = \pro_\G$ as simplicial maps. Thus, $\bs$ is the desired isomorphism from $\pro_\G$ to $\bq$ in $\EC(\Y)$. 
\end{proof}

\section{Algorithms} \label{sec:algos}

The efficiency (and often, even the feasibility) of group-theoretic algorithms varies enormously with the data structures used to store groups on a computer\footnote{Finite groups may be stored on computers as multiplication tables, lists of permutations, lists of invertible matrices coming from a representation, or sets of generators and relations. The standard reference \cite{holt:eick:obrien:05} devotes its entire third chapter to such considerations.}. Given this dependence, we will describe the requisite group-theoretic subroutines at a high level, and hope that their prospective implementer will be able to tailor data structures to specific choices of the acting group. Na\"ive implementations which work for all finite groups are straightforward to specify, but they may be quite far from optimal in practice.

For similar reasons, we do not fix a particular data structure for representing the $\G$-action. There are several reasonable options, each with its own relative (dis)advantages. For instance, one could employ a hash table with key/value assignments $(g,v) \mapsto g \cd v$ ranging over group elements $g$ in $\G$ and vertices $v$ of $\X$. 

\begin{rem}{\em Even with extremely na\"ive implementations, {\em both algorithms described below can be made to run in quadratic time} in terms of the order of $\G$ and the dimension of $\X$. The bounds are recorded in Corollary \ref{cor:polytime} below.}
\end{rem}

\subsection{Subroutines}\label{ssec:subs}

We will assume the ability to evaluate results of the two standard group operations in $\G$:
\begin{enumerate}
\item the function {\tt prod} accepts $(g, h)\in \G \times \G$ and returns their product $g \cd h$, and
\item the function {\tt inv} accepts $g \in \G$ and returns its inverse $g\inv$.
\end{enumerate}
Even for these simple operations, the data structure which holds $\G$ plays an essential role in determining the computational complexity. For instance, if $\G$ has been stored via its multiplication table, then {\tt prod} incurs a constant cost; but if $\G$ is stored as a list of permutations, then this cost may be as large as $\Oh(|\G|)$. We will denote the complexity of a single product computation in $\G$ by $r$ and a single inversion by $i$.

We also require a less standard function {\tt minrep}. Fix, once and for all, an enumeration of the elements of $\G$
\[
\iota:\G \to \{1,2,\ldots,|\G|\}, 
\] where $|\G|$ is the cardinality of $\G$. It will be convenient to assume that the identity has minimal index (i.e., $\iota(1_\G) = 1$). With this preamble in place, {\tt minrep} accepts as input a subgroup $\HH \leq \G$ along with an element $g \in \G$, and returns the $\iota$-minimal element contained in the corresponding left-coset $g\cd \HH$. In particular, if $g \in \HH$, then {\tt minrep}$(\HH,g)$ returns the identity $1_\G$. A na\"ive implementation of {\tt minrep} would involve checking the index of each group element obtained by multiplying $g$ with each element of $\HH$ --- this incurs a cost of $\Oh(|\HH|p)$. We will denote by $m$ the worst-case complexity of executing {\tt minrep} (across all choices of input $\HH$ and $g$). Calling {\tt minrep} on a fixed $\HH$ with all choices of $g \in \G$ and removing duplicates from the resulting outputs produces a {\em coset transversal} for $\G/\HH$ in the language of \cite[Ch 4.6.7]{holt:eick:obrien:05}.

The three subroutines above involved only the group $\G$; these next three also require knowledge of how $\G$ acts on $\X$:
\begin{enumerate}
	\item {\tt orb} takes as input a simplex $x$ of $\X$ and returns the list of all simplices in the orbit $\G x$, which is always a subset of $\X$;
	\item  {\tt stab} takes as input a simplex $x$ of $\X$ and returns its stabilizer $\G_x$, which is always a subgroup of $\G$; and,
	\item {\tt trans} takes as input two simplices $x$ and $x'$ of $\X$, and returns a group element $g$, if one exists, satisfying $g \cd x = x'$.
\end{enumerate}
We denote the worst-case complexity of these three algorithms by $o, s$ and $t$ respectively. Again, these complexities will vary with how $\G$ and its action on $\X$ are stored on the computer.

\subsection{The Compression Algorithm}\label{ssec:compalg}

Our first main algorithm {\tt Compress} takes as input the regular action of a finite group $\G$ on a finite simplicial complex $\X$. We will write $\Sub(\G)$ to denote the set of all the subgroups of $\G$. 

\begin{rem}
\label{rem:compout}	
{\em
The output of {\tt Compress}$(\X,\G)$ is a triple $(\Y,S,T)$, where
\begin{enumerate}
	\item  $\Y$ is the quotient simplicial complex $\X/\G$; for each $k \geq 1$, define
	\[
	\Y[k] = \{y_1 \succeq y_2 \succeq \cdots \succeq y_k \mid y_i \in \Fac(\Y) \text{ and } \dim y_i - \dim y_j = j - i\},
	\] 	
	\item $S:\Y[1] \to \Sub(\G)$ is a map that sends each simplex $y$ of $\Y$ to a subgroup $S(y) \leq \G$, which is the stabilizer of a chosen lift of $y$, and
	\item $T: \Y[2] \to \G$ is a map that sends each codimension one face relation $y_1 \succeq y_2$ in $\Y$ to a transfer element $T(y_1 \succeq y_2)$ in $\G$.
\end{enumerate}
}
\end{rem}

As the algorithm executes, it visits all the simplices of $\X$ in ascending order with respect to dimension. In order to guarantee its termination, we initially mark all simplices as unvisited (via boolean variables, for instance) and allow the algorithm to mark simplices as visited once it has processed them. {\tt Compress} also constructs two natural functions relating the input simplicial complex $\X$ and the output simplicial complex $\Y$:
\begin{enumerate}
	\item $p:\X[1] \to \Y[1]$ is the (simplicial) orbit map from Sec \ref{ssec:simpact}, while
	\item $\ell:\Y[1] \to \X[1]$ is the (not necessarily simplicial) assignment of lifts as in Sec \ref{ssec:liftrans}.
\end{enumerate} 
Thus, the composite $p\circ \ell$ is always the identity map on $\Y$, while $\ell \circ p$ sends each simplex of $\X$ to the lift chosen for its orbit class in $\Y$.

\begin{table}[h!]
\centering
{\bf Algorithm: }{\tt {Compress}} \label{alg:comp} \\
{\bf Input: }Regular $\G$-action on $\X$ \\
{\bf Output: }Triple $(\Y,S,T)$\\
{
\begin{tabular}{c|l}
\hline
01 & {\bf for each} $d$ in $(0,1,\ldots,\dim \X)$ \\
02 & \tab {\bf for each} unvisited $d$-simplex $x$ in $\X$ \\
03 & \tab \tab {\bf add} a new $d$-simplex $y$ to $\Y$ \\
04 & \tab \tab {\bf set} $\ell(y) = x$ and $S(y) = ${\tt~ stab}$(x)$ \\
05 & \tab \tab {\bf for each} $x'$ in {\tt orb}$(x)$ \\
06 & \tab \tab \tab {\bf mark} $x'$ as visited \\
07 & \tab \tab \tab {\bf set} $p(x') = y$ \\
08 & \tab \tab {\bf for each} face $z \prec x$ in $\X$ of dim $d-1$ \\
09 & \tab \tab \tab {\bf set} $p(z)$ as a face of $y$ in $\Y$ \\
10 & \tab \tab \tab {\bf set} $T(y \succeq p(z)) = \text{\tt trans}(z,\ell(p(z)))$ \\
11 & \tab \tab {\bf if} $\X$ has no more unvisited simplices \\
12 & \tab \tab\tab {\bf return} $(\Y,S,T)$ \\
\hline
\end{tabular}
}
\end{table}
To confirm that the algorithm terminates, we note that line 06 marks every simplex in the orbit of a hitherto-unvisited simplex $x$ as visited, and this $x$ must lie in its own orbit. Thus, every simplex in $\X$ is eventually visited, at which point the {\bf if} conditional in line 11 evaluates to true.

\begin{rem} \label{rem:comp} 
{\em Before moving on to the reconstruction algorithm, we highlight two features of {\bf compress} which significantly impact its computational complexity on distributed systems.
\begin{enumerate}

\item  Line 09 requires knowledge of $p$-images of lower-dimensional faces of $x$, which means that the outer {\bf for} loop in line 01 can not be parallelized --- simplices of $\X$ must be processed in an order monotone with respect to their dimension.

\item On the other hand, all simplices of a fixed dimension $d$ can be processed in parallel provided that the $(d-1)$ simplices have been already processed. In other words, the lines 02-10 may be distributed across several processors without loss of correctness.

\item Although we will not use the following fact, we note that the inner {\bf for} loops of lines 05-07 and 08-10 may also be parallelized --- for a given simplex $x$, we are not required to process its orbit $\G x$ or its faces $z \prec x$ in some prescribed serial order.

\end{enumerate}
}
\end{rem}
\subsection{The Reconstruction Algorithm}\label{ssec:recalg}

Our second algorithm {\tt Reconstruct} implements the Basic Construction of Sec \ref{ssec:basic}. It accepts as input a triple $(\Y,S,T)$ produced by {\tt Compress} (see Remark \ref{rem:compout}) along with the group $\G$ where $S$ and $T$ take their values. It returns a simplicial complex $\Z$ along with the desired regular $\G$-action. In light of Sec \ref{sec:recon}, the $d$-simplices of $\Z$ will be stored as pairs of the form $(y,g)$, where $y$ is a $d$-simplex of $\Y$ and $g$ is the $\iota$-minimal representative of some left coset of $S(y)$ in $\G$.

In addition to the subroutines already described, {\tt Reconstruct} requires a purely combinatorial procedure, {\tt uniqsort}. This takes in an unordered list of $\G$-elements and returns a list sorted along the enumeration $\iota$, with all duplicates removed. 
Line 03 uses this subroutine to find the $\iota$-minimal representative of each left-coset of the subgroup $S(y) \leq \G$ (thus the size of $M$ is exactly $|\G|/|S(y)|$). Line 07 checks whether the group elements $g'$ and $g\cd T(y \succ y')$ lie in the same left coset of $S(y')$ in $\G$ --- since $y \succeq y'$ is enforced by the previous line, we have $(y,\cst_y(g)) \btrt (y',\cst_{y'}(g'))$ in the language of Sec \ref{sec:recon}, so we add the corresponding face relation to $\Z$ in line 08.  

\begin{rem}
{\em
Since line 06 requires knowledge of all $(d-1)$-simplices present in $\Z$ before the $d$-simplex $y$ can be processed, the outer {\bf for} loop in line 01 can not be parallelized. However, all simplices of a fixed dimension $d$ may be processed at once, so the {\bf for} loop of line 02 is easily parallelized.
}
\end{rem}

\begin{table}[h!]
	\centering
	{\bf Algorithm: }{\tt Reconstruct} \label{alg:recon} \\
	{\bf In: }$(\Y,S,T)$ and $\G$ \\
	{\bf Out: } Simplicial complex $\Z$\\
	{
		\begin{tabular}{c|l}
			\hline
			01 & {\bf for each} $d$ in $(0,1,\ldots,\dim \Y)$ \\
			02 & \tab {\bf for each} $d$-simplex $y$ in $\Y$ \\
			03 & \tab \tab M = {\tt uniqsort}$\big(\{$ {\tt minrep}$(S(y),g) \mid g \in \G\}\big)$ \\
			04 & \tab \tab {\bf for each} $g$ in $M$ \\
			05 & \tab \tab \tab {\bf add} a $d$-simplex $(y,g)$ to $\Z$ \\
			06 & \tab \tab \tab {\bf for each} simplex $(y',g')$ in $\Z$ with $(y \succ y')$ in $\Y[1]$ \\
			07 & \tab \tab \tab \tab {\bf if} $\text{\tt prod}(g',\text{\tt prod}(\text{\tt inv}(g),T(y \succ y')))$ is in $S(y')$ \\
			08 & \tab \tab \tab \tab \tab {\bf set} $(y',c')$ as a face of $(y,c)$ in $\Z$\\
			09 & {\bf return }$\Z$ \\
			\hline
		\end{tabular}
	}
\end{table}

Recovering the $\G$-action on $\Z$ is straightforward --- since its simplices are stored as pairs of the form $(y,g)$, for each $h \in \G$ we have 
\[
h\cd (y,g) = \big(y,\text{\tt minrep}(S(y),\text{\tt prod}(h,g))\big).
\]
Thus, we recover not only $\Z$ but also a $\G$-action on it. Prop \ref{prop:bascon} guarantees that if the input $(\Y,S,T)$ to {\tt Reconstruct} was produced by running {\tt Compress} on a simplicial complex $\X$ with a regular $\G$-action, then $\X$ and $\Z$ are $\G$-equivariantly isomorphic.

\subsection{Complexity Analysis}
We analyze both {\tt Compress} and {\tt Reconstruct} in terms of the complexity parameters $p,i,m,o,s$ and $t$ associated to the six subroutines of Sec \ref{ssec:subs}.

\begin{thm}
	\label{thm:comp}
The computational complexity of running {\tt Compress} for the regular action of a group $\G$ on a finite $n$-dimensional simplicial complex $\X$ is
\[
\Oh\Big( (n+1) \cd (s+o+f+t \cd (n+1))\Big), \text{ where}
\]
\begin{enumerate}
	\item $s,o$ and $t$ are the complexity parameters for {\tt stab}, {\tt orb} and {\tt trans}, while
	\item $f$ is the maximal orbit-length\footnote{Note that this $f$ is also the maximal stabilizer index $\max_{x \in \X} 	\{[\G:\G_x]\}$ and the maximal fiber cardinality $\max_{y \in \X/G} \{|\pro_G\inv(y)|\}$ of the orbit map.} encountered among the simplices of $\X$,
	\[ 
	f = \max_{x \in \X}\{ |\G x| \},
	\]
\end{enumerate} 
provided that the number of available processors exceeds the number of $d$-simplices in $\X$ for each dimension $d \in \{0,1,\ldots,n-1,n\}$.
\end{thm}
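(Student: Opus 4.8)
The plan is to bound the cost of {\tt Compress} by accounting, line by line, for the work performed inside the loop structure of the algorithm table, and then invoke the parallelization observations from Remark \ref{rem:comp}. First I would note that the outer {\bf for} loop of line 01 iterates exactly $n+1$ times (once per dimension $d \in \{0,\ldots,n\}$), and that this loop is inherently serial because line 09 needs the $p$-images of lower-dimensional faces; so the total runtime is $(n+1)$ times the worst-case cost of processing a single dimension. Within a fixed dimension $d$, the hypothesis on the number of processors lets all $d$-simplices of $\X$ be handled simultaneously (lines 02--10 distribute without loss of correctness, per Remark \ref{rem:comp}), so the per-dimension cost is just the cost of the body of the inner loop for one unvisited simplex $x$.

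Next I would tally that body. Line 04 calls {\tt stab} once, contributing $s$. The loop of lines 05--07 runs over {\tt orb}$(x)$: calling {\tt orb} costs $o$, and the loop body (marking visited, setting $p$) iterates at most $f = \max_{x}|\G x|$ times, each iteration costing $\Oh(1)$, so this block is $\Oh(o + f)$. The loop of lines 08--10 runs over the faces $z \prec x$ of dimension $d-1$; a $d$-simplex has exactly $d+1 \leq n+1$ such faces, and for each one line 10 calls {\tt trans}, costing $t$, while line 09 is $\Oh(1)$ bookkeeping, so this block contributes $\Oh\big((n+1)\cd t\big)$. Summing, one processing step costs $\Oh\big(s + o + f + (n+1)\cd t\big)$, and multiplying by the $(n+1)$ serial iterations of line 01 yields the claimed bound $\Oh\big((n+1)\cd(s+o+f+t\cd(n+1))\big)$.

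The only genuinely delicate point — and the one I would treat most carefully — is justifying that the per-dimension cost really is the single-simplex body cost rather than that cost times the number of $d$-simplices. This rests squarely on the processor hypothesis and on the claim from Remark \ref{rem:comp} that lines 02--10 can be run in parallel across the $d$-simplices of $\X$ without affecting correctness; I would remark that no write-conflicts arise because the only shared mutable state read within dimension $d$ (the $p$-images and face structure of $\Y$ in dimensions ${}<d$) was finalized in earlier iterations of line 01, while the per-$x$ writes (adding $y$, setting $\ell(y)$, $S(y)$, $T$, and marking the orbit visited) touch disjoint data — distinct simplices $x$ in the same dimension have disjoint orbits, so the {\bf visited} flags written in line 06 never collide. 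One should also observe that the {\bf if}/\,{\bf return} of lines 11--12 does not affect the worst-case bound (it can only cause early termination). Everything else is the routine arithmetic of the previous paragraph, so I would not belabor it.

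I expect the main obstacle to be purely expository: stating the concurrency argument precisely enough to be convincing without dragging in a formal model of parallel computation. A clean way around this is to phrase the bound as counting the \emph{depth} (critical-path length) of the computation DAG rather than total work, observing that the DAG has $n+1$ serial layers (one per dimension) and that each layer has depth $\Oh(s+o+f+(n+1)t)$ once the $f$-fold and $(n+1)$-fold inner iterations within a layer are themselves executed sequentially (as they must be, since lines 05--07 and 08--10 are not parallelized in the stated bound); then the processor hypothesis is exactly what guarantees the layer's width does not inflate its depth.
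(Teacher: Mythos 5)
Your argument is essentially identical to the paper's proof: count $(n+1)$ serial iterations of the outer loop, use the processor hypothesis (via Remark \ref{rem:comp}) to reduce each dimension to the cost of one loop body, and tally that body as $\Oh(s+o+f+t\cd(n+1))$. Your extra concurrency discussion goes beyond what the paper records and is welcome, though the aside that ``distinct simplices $x$ in the same dimension have disjoint orbits'' is imprecise (two $d$-simplices may lie in the \emph{same} orbit, so their orbits coincide rather than being disjoint); this does not affect the stated complexity bound.
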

\begin{proof}
The outer {\bf for} loop in line 01 of {\tt Compress} runs exactly $(n+1)$ times corresponding to $d$-values $\{0,1,\ldots,n\}$. As noted in Remark \ref{rem:comp}, the $d$-simplices $x$ of $\X$ can be processed independently of each other for each fixed $d$. So by our assumption on the number of processors, it suffices to only measure the complexity of executing lines 03-10 once. Now line 04 incurs the cost of running {\tt stab} once, while the {\bf for} loop in line 05-07 runs {\tt orb} once and iterates at most $f$ times. Finally, the {\bf for} loop in lines 08-09 runs once for each codimension one face of $x$ and calls {\tt trans} each time; there are at most $(n+1)$ such faces since $\dim x \leq \dim \X = n$. Combining these contributions, the cost of processing a single $d$-simplex $x$ is \[\Oh(s+o+f+t\cd (n+1)).\] The desired complexity estimate now follows from the fact that the outermost loop executes $(n+1)$ times.
\end{proof}

Based on the preceding estimate, we expect that the cost of running {\tt Compress} will be dominated by the calls to {\tt trans} in line 10. It is therefore of compelling interest to optimize the implementation of {\tt trans} to the largest extent possible. We recommend trying to find a large subset $\mathbb{T} \subset \X$ to populate many lifts $\ell(y)$ via a $\G$-equivariant version of breadth-first search on $\X$ (when a simplex is visited, mark every simplex in its orbit as visited). The larger this $\mathbb{T}$, the more frequently we will have equalities $z = \ell(p(z))$ in line 10 of {\tt Compress}. When its two inputs are equal, {\tt trans} is allowed to simply return the identity $1_G$ and hence incur a constant cost.

Finally, we turn to {\tt Reconstruct}.
\begin{thm}
	\label{thm:recon}
The computational complexity of running {\tt Reconstruct} on the input $(\Y,S,T;\G)$ when $\Y$ is a finite $n$-dimensional simplicial complex is
\[
\Oh\Big((n+1)\cd\big[k \cd(m + \log_2 k) + f \cd (n+1) \cd (2r+i+h)\big]\Big), \text{ where}
\]
\begin{enumerate}
\item $k = |\G|$ is the order of $\G$,
\item $m, r$ and $i$ are the complexity parameters for {\tt minrep}, {\tt prod} and {\tt inv}, while
\item $f = \max_{y \in \Y}\{k/|S(y)|\}$ and $h = \max_{y \in \Y}\{|S(y)|\}$ (both are $\leq k$),
\end{enumerate}
provided that the number of available processors exceeds the number of $d$-dimensional simplices in $\Y$ for each $d$ in $\{0,1,\ldots,n-1,n\}$.
\end{thm}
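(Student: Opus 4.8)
The plan is to imitate the accounting in the proof of Thm~\ref{thm:comp}. The outer {\bf for} loop in line~01 runs exactly $n+1$ times, once per dimension $d \in \{0,1,\ldots,n\}$, and it is inherently serial: no $d$-simplex of $\Z$ can be processed until all $(d-1)$-simplices are present, since line~06 inspects them. For a fixed $d$, however, the distinct $d$-simplices $y$ of $\Y$ are handled independently in line~02, so under the hypothesis on the number of processors it suffices to bound the cost of lines~03--08 for one simplex $y$ and then multiply by $n+1$.

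Fix such a $y$. First I would charge line~03: it calls {\tt minrep}$(S(y),g)$ once for each of the $k=|\G|$ elements $g$ of $\G$, at cost $\Oh(km)$, and then runs {\tt uniqsort} on the resulting length-$k$ list --- a comparison sort along the enumeration $\iota$, followed by deduplication --- at cost $\Oh(k\log_2 k)$; together this is $\Oh\!\big(k(m+\log_2 k)\big)$. The surviving list $M$ is a coset transversal for $S(y)$ in $\G$, so $|M| = |\G|/|S(y)| \leq f$.

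Next I would bound the loop in lines~04--08, which runs $|M|\leq f$ times. Each pass appends one simplex $(y,g)$ in line~05 at cost $\Oh(1)$; the inner loop of lines~06--08 then visits the simplices $(y',g')$ of $\Z$ lying over a codimension-one face $y'$ of the $d$-simplex $y$ in $\Y$. Since $y$ has at most $d+1\leq n+1$ codimension-one faces, and $(y,g)$ acquires exactly one face in each such direction --- namely the representative of the coset $\cst_{y'}\!\big(g\cd\Phi\inv_{y\succeq y'}\big)$ from Sec~\ref{sec:recon} --- organizing this scan by facet lets it contribute $\Oh(n+1)$ checks per pass. Each check is line~07: one {\tt inv}, two {\tt prod}s, and a membership test of the resulting element in $S(y')\leq\G$, for a total of $\Oh(i+2r+h)$ since $|S(y')|\leq h$; line~08 then adds the face relation at cost $\Oh(1)$. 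Hence lines~04--08 cost $\Oh\!\big(f\cd(n+1)\cd(2r+i+h)\big)$ per simplex $y$, and combining with the line~03 estimate and multiplying by the $n+1$ passes of the outer loop yields the stated bound.

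The step I expect to demand the most care is the coface count just used. Taken at face value, line~06 scans \emph{every} simplex $(y',g')$ already placed in $\Z$ over a codimension-one face of $y$, of which there can be $|\G|/|S(y')|$ per facet; a naive count therefore inflates the inner loop by an extra factor of $f$. The crux is to argue --- by maintaining $\Z$ with an index keyed on the simplices of $\Y$, and by using that the face of $(y,g)$ in a facet direction $y'$ is pinned down by the single coset above --- that only $\Oh(n+1)$ coset representatives need be examined per pass of line~04, so that the $\Oh(h)$ subgroup-membership test in line~07 is the genuine bottleneck. Reconciling the three competing quantities $|M|\leq f$, the facet count $\leq n+1$, and the per-test cost $2r+i+h$ is precisely what produces the summand $f\cd(n+1)\cd(2r+i+h)$; every other line contributes only $\Oh(1)$ work and is absorbed.
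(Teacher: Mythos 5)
Your accounting is essentially the paper's own proof: the same inside-out decomposition, the same $\Oh\big(k\cd(m+\log_2 k)\big)$ charge for line 03 ({\tt minrep} called $k$ times plus the sort/deduplication), the same $\Oh\big(f\cd(n+1)\cd(2r+i+h)\big)$ charge for lines 04--08, and the same use of the processor hypothesis to pay for only one simplex per dimension before multiplying by the $n+1$ serial passes of the outer loop. The facet-count subtlety you flag at the end is resolved in the paper simply by reading line 06 as one iteration per codimension-one face $y'$ of $y$ (at most $n+1$ of them), so your extra care there refines, rather than departs from, the published argument.
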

\begin{proof}
We examine the nested {\bf for} loops from the inside out, starting with the innermost loop of lines 06-08. This loop runs once for each $(d-1)$-dimensional face $y'$ of $y$, and since $\dim \Y= n$ there are at most $(n+1)$ such faces. For each such face, {\tt prod} is invoked at most twice and {\tt inv} at most once in line 07. We then check whether an element of $\G$ lies in $S(y')$, a list of size bounded by $h$. Thus, our innermost for loop incurs a $\Oh((n+1) \cd (2r+i+h))$ cost per iteration. The intermediate {\bf for} loop spanning lines 04-08 runs at most $f$ times since $f$ bounds from above the index $[\G:S(y)]$ and hence the size of $M$. Thus, the cost of running this intermediate loop is $\Oh(f\cd (n+1)\cd(2r+i+h))$. By our assumption on the number of processors, it suffices to run the {\bf for} loop of lines 02-08 only once per dimension $d$, and it remains to account for line 03. Here we first call {\tt minrep} exactly $k$ times (cost $\Oh(k\cd m$)), and then sort the resulting list of $k$ outputs and remove duplicates (cost $\Oh(k \log_2 k)$). Thus, when processing simplices of a fixed dimension $d$, {\tt Reconstruct} incurs a computational cost of
\[
\Oh(k\cd(m+\log_2 k)+f\cd (n+1)\cd (2r+i+h)).
\]
Since the outermost {\bf for} loop of lines 01-08 runs exactly $(n+1)$ times, we obtain the desired estimate.	
\end{proof}

By over-estimating most of the auxiliary complexity parameters described above in terms of the order of $\G$, it becomes evident that both {\tt compress} and {\tt reconstruct} will run in polynomial time. As before, let $k$ be the order of the acting group $\G$. We can estimate the complexity parameters used in Theorems \ref{thm:comp} and \ref{thm:recon} as follows.
\begin{enumerate}
	\item Both $r$ and $i$ are bounded by $k$, since in the worst case we have to search every element of $\G$ to find a product or an inverse. Similarly $m$ is bounded above by $k$ since in the worst case one may search all of $\G$ to find the minimal representative of a given coset.
	\item The parameter $o$ is bounded by $k$ since one only has to compute the action of every group element on a given simplex. For the same reason, the orbit-length parameters $f$ (from both theorems) and $h$ (from Theorem \ref{thm:recon}) are bounded by $k$.
	\item Meanwhile, both $s$ and $t$ are bounded by $k$ since in the worst case one has to test every element of $\G$ to check whether it stabilizes a simplex or sends one simplex to another.
\end{enumerate}
The following result follows directly from using these over-estimates in the complexities from Theorems \ref{thm:comp} and \ref{thm:recon} along with the elementary properties of $\Oh(\cdot)$.

\begin{corollary}\label{cor:polytime}
Let $\G$ be a group of order $k$ that acts regularly on a finite simplicial complex $\X$ of dimension $n$. Assume that the number of available processors exceeds the number of simplices in $\X$. Then the computational complexity of running {\tt compress} for this action is no worse than
\[
\Oh\Big(k \cdot (n+1)^2\Big).
\]	
And the complexity of running {\tt reconstruct} on the output of {\tt compress} is no worse than 
\[
\Oh\Big(k^2 \cdot (n+1)^2\Big).
\]
\end{corollary}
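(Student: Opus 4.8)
The plan is to feed the crude worst-case bounds on the six subroutine parameters $r,i,m,o,s,t$ (and the derived quantities $f$ and $h$) into the complexity formulas already established in Theorems \ref{thm:comp} and \ref{thm:recon}, and then absorb the lower-order terms into the $\Oh(\cdot)$. No new combinatorics or algebra is required: once the parameter estimates are in hand, the corollary is purely a bookkeeping exercise.

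First I would record the parameter estimates, each immediate from the na\"ive implementations sketched in Sec \ref{ssec:subs}. For a group of order $k$, a single product or inversion costs at most $\Oh(k)$, since in the worst case one scans all of $\G$, so $r,i\le k$; a brute-force search for the $\iota$-minimal element of a coset gives $m\le k$; and because {\tt orb}, {\tt stab} and {\tt trans} each need only evaluate the action of every element of $\G$ on a fixed simplex, we get $o,s,t\le k$. For the same reason the maximal orbit length obeys $|\G x|\le k$, so $f\le k$, while the maximal stabilizer order obeys $|S(y)|\le k$ because $S(y)$ is a subgroup of $\G$, so $h\le k$. I would also note that the processor hypotheses of the two theorems ({\tt compress} needs more processors than there are $d$-simplices of $\X$, and {\tt reconstruct} more than there are $d$-simplices of $\Y$, in each dimension $d$) follow from the corollary's blanket assumption: for {\tt reconstruct} this uses that $\Y=\X/\G$ is the image of $\X$ under the dimension-preserving surjection $\pro_\G$, hence has no more $d$-simplices than $\X$ does (and in particular $\dim\Y=n$).

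Substituting $s,o,t,f\le k$ into the {\tt compress} estimate $\Oh\big((n+1)(s+o+f+t(n+1))\big)$ and expanding leaves $k(n+1)^2$ as the dominant term, which gives the first bound. Substituting $m,r,i,h,f\le k$ and $\log_2 k\le k$ into the {\tt reconstruct} estimate $\Oh\big((n+1)\big[k(m+\log_2 k)+f(n+1)(2r+i+h)\big]\big)$ bounds the bracketed quantity by $\Oh\big(k^2+k^2(n+1)\big)=\Oh\big(k^2(n+1)\big)$, so the whole expression is $\Oh\big(k^2(n+1)^2\big)$. Every step here is an elementary manipulation of $\Oh(\cdot)$, so there is no genuine obstacle; the one point deserving a moment's attention is verifying that the processor hypotheses of Theorems \ref{thm:comp} and \ref{thm:recon} are met, which is addressed in the previous paragraph.
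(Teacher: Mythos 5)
Your proposal is correct and follows essentially the same route as the paper: over-estimate all subroutine parameters (and $f$, $h$) by $k$, substitute into the bounds of Theorems \ref{thm:comp} and \ref{thm:recon}, and absorb lower-order terms via elementary properties of $\Oh(\cdot)$. Your extra remark that the processor hypothesis for {\tt Reconstruct} is inherited because $\Y=\Quot$ has no more $d$-simplices than $\X$ is a small, welcome clarification the paper leaves implicit.
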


\section*{Acknowledgements}
We are grateful to Scott Murray for providing us with a Magma \cite{magma} implementation of both algorithms, as well as for his many useful suggestions which improved this paper. LC is funded by the Simons Foundation Collaboration grant number 422182. VN is grateful to John Meier for timely group-theoretic advice and to Martin Bridson for the loaned copy of \cite{bridson:haefliger:99}. His work was supported by the Friends of the Institute for Advanced Study.

\bibliographystyle{abbrv}
\bibliography{symbib}

\end{document}